\newcommand{\al}{\alpha}
\newcommand{\ga}{\gamma}
\newcommand{\de}{\delta}
\newcommand{\la}{\lambda}
\newcommand{\om}{\omega}
\newcommand{\eps}{\varepsilon}
\newcommand{\vv}{\varphi}
\newcommand{\iy}{\infty}
\theoremstyle{plain}
\newtheorem{thm}{Theorem}
\newtheorem{lem}{Lemma}
\newtheorem{cor}{Corollary}
\theoremstyle{remark}
\DeclareMathOperator*{\Res}{Res}
\begin{document}

\begin{center}
{\large\bf An inverse spectral problem for the matrix Sturm-Liouville operator on the half-line 
}
\\[0.2cm]
{\bf Natalia Bondarenko} \\[0.2cm]
\end{center}

\vspace{0.5cm}

{\bf Abstract.} The matrix Sturm-Liouville operator with an integrable potential on the half-line is considered.
We study the inverse spectral problem, which consists in recovering of this operator by the Weyl matrix.
The main result of the paper is the necessary and sufficient conditions on the Weyl matrix of the non-self-adjoint
matrix Sturm-Liouville operator. We also investigate the self-adjoint case and obtain the characterization of
the spectral data as a corollary of our general result.  

\medskip

{\bf Keywords:} matrix Sturm-Liouville operators, inverse spectral problems, method of
spectral mappings. 

\medskip

{\bf AMS Mathematics Subject Classification (2010):} 34A55 34B24 47E05

\vspace{1cm}

{\large \bf 1. Introduction and main results} \\

Inverse spectral problems consist in recovering differential operators from their spectral characteristics.
Such problems arise in many areas of science and engineering, i.e. quantum mechanics, geophysics, astrophysics, electronics.
The most complete results were obtained in the theory of inverse spectral problems for scalar Sturm-Liouville operators
$-y'' + q(x) y$
(see monographs \cite{Mar77, Lev84, PT87, FY01} and references therein). The greatest progress in the study of 
Sturm-Liouville operators on the {\it half-line} has been achieved by V.A.~Marchenko \cite{Mar77}.
He has studied the inverse problem for the non-self-adjoint locally integrable potential by the generalized spectral function,
using the method of transformation operator. We also mention, that V.A.~Marchenko has solved the inverse scattering problem on the half-line. 
Later V.A.~Yurko has shown, that the inverse problem by the generalized spectral function is equivalent 
to the problem by the generalized Weyl function \cite{FY01}. These problems are closely related to the inverse 
problem  for the wave equation $u_{tt} = u_{xx} - q(x) u$. When the potential is integrable on the half-line,
the generalized Weyl function turns into the ordinary Weyl function. V.A. Yurko has studied inverse problems for
the Sturm-Liouville operator with the potential from $L(0, \iy)$ by the Weyl function and,
in the self-adjoint case, by the spectral data. He developed a constructive algorithm for the solution of these problems
and obtained necessary and sufficient conditions for the corresponding spectral characteristics.
The details are presented in \cite{FY01}. In this paper, we generalize his results to the matrix case. 

The research on the inverse matrix Sturm-Liouville problems started in connection with their applications
in quantum mechanics \cite{AM60}. Matrix Sturm-Liouville equations can also be used to describe
propagation of seismic \cite{Beals95} and electromagnetic waves \cite{Boutet95}.
Another important application is the integration of matrix nonlinear evolution equations, such as matrix KdV and boomeron equations \cite{CD77}.
The theory of matrix Sturm-Liouville problems have been actively developed during the last twenty years.
Trace formulas, eigenvalue asymptotics and some other aspects of direct problems 
were studied in papers \cite{Yang12, Carlson99, Carlson00, Chern98, Dwyer94}. The works
\cite{Malamud05, Yur06, CK09, MT10, Bond14} contain results of the most resent investigations of inverse problems
for matrix Sturm-Liouville operators on a finite interval. 

For the matrix Sturm-Liouville operator on the half-line, Z.S.~Agranovich and V.A.~Marchenko \cite{AM60} 
have made an extensive reseach on the inverse {\it scattering} problem, using the transformation operator method \cite{Mar77, Lev84}.
G. Freiling and V.A.~Yurko \cite{FY07} have started the investigation of the inverse {\it spectral} problem
for the non-self-adjoint matrix Sturm-Liouville operator.
They have proved the uniqueness theorem and provided a constructive 
algorithm for the solution of the inverse problem by the so-called Weyl matrix (the generalization of the scalar Weyl function \cite{Mar77}, \cite{FY01}).
Their approach is based on the method of spectral mappings (see \cite{FY01, Yur02}),
whose main ingredient is the contour integration in the complex plane of the spectral parameter $\la$.
We mention that a related inverse problem for the matrix wave equation was investigated in \cite{ABI91}.

In this paper, we study the inverse problem for the matrix Sturm-Liouville operator on the half-line
by the Weyl matrix. We present the necessary and sufficient conditions for the solvability of the inverse problem
in the general non-self-adjoint situation. As a particular case, we consider the self-adjoint problem,
and get the necessary and sufficient conditions on the spectral data of the self-adjoint operator.
Our method is based on the approach of \cite{FY07}.

Proceed to the formulation of the problem. 
Consider the boundary value problem $L = L(Q(x), h)$ for the matrix Sturm-Liouville equation
\begin{equation} \label{eqv}
    \ell Y: = -Y''+ Q(x) Y = \la Y, \quad x > 0,                                           
\end{equation}
\begin{equation} \label{IC}
    U(Y) := Y'(0) - h Y(0) = 0.                                
\end{equation}

Here $Y(x) = [y_k(x)]_{k = \overline{1, m}}$ is a column vector,
$\la$ is the spectral parameter, $Q(x) = [Q_{jk}(x)]_{j, k = 1}^m$ is an $m \times m$ matrix function with entries from $L(0, \iy)$, 
and $h = [h_{jk}]_{j, k = 1}^m$, where $h_{jk}$ are complex numbers.  

Let $\la = \rho^2$, $\rho = \sigma + i \tau$, and let for definiteness $\tau := \mbox{Im} \rho \ge 0$.
Denote by $\Phi(x, \la) = [\Phi_{jk}(x, \la)]_{j, k = 1}^m$ the matrix solution of equation \eqref{eqv}, 
satisfying boundary conditions $U(\Phi) = I_m$ ($I_m$ is the $m \times m$ unit matrix), 
$\Phi(x, \la) = O(\exp(i \rho x))$, $x \to \iy$, $\rho \in \Omega := \{ \rho \colon \mbox{Im}\, \rho \ge 0, \: \rho \ne 0\}$.
Denote $M(\la) = \Phi(0, \la)$. We call the matrix functions $\Phi(x, \la)$ and $M(\la)$ 
the {\it Weyl solution} and the {\it Weyl matrix} of $L$, respectively.
Further we show, that the singularities of $\Phi(x, \la)$ and $M(\la)$ 
coincide with the spectrum of the problem $L$.
The Weyl functions and their generalizations often appear in applications and in pure
mathematical problems for various classes of differential operators.
In this paper, we use the Weyl matrix as the main spectral characteristic and study the following problem.

\medskip
{\bf Inverse Problem 1.} Given the Weyl matrix $M(\la)$, construct the potential $Q$ and the coefficient $h$.
\medskip

The paper is organized as follows. 
In {\it Section~2}, we present the most important properties of the Weyl matrix
and briefly describe the solution of Inverse Problem~1, given in \cite{FY07}.
By the method of spectral mappings, the nonlinear inverse problem is transformed to the linear
equation in a Banach space of continuous matrix functions. In {\it Section~3}, we use this solution
to obtain our main result, necessary and sufficient conditions for the solvability of Inverse Problem~1. 
In the general non-self-adjoint situation, 
one has to require the solvability of the main equation in the necessary and sufficient conditions.
Of course, it not always easy to check this requirement, but one can not avoid it even 
for the scalar Sturm-Liouville operator (examples are provided in \cite{FY01}).
Therefore we are particularly interested in the special cases, when the solvability of the main equation
can be easily checked. First of all, there is the self-adjoint case, studied in {\it Sections~4} and {\it 5}. 
We introduce the spectral data and get their characterization. We also consider finite perturbations
of the spectrum in {\it Section~6}. In this case, the main equation 
turns into a linear algebraic system, and one can easily verify its solvability.

\bigskip

{\large \bf 2. Preliminaries}\\

In this section, we provide the properties of the Weyl matrix
and the algorithm for the solution of Inverse Problem~1 by the method
of spectral mappings. We give the results without proofs, one can read \cite{AM60, FY07} for more details.

Start with the introduction of the notation.
We consider the space of complex column $m$-vectors $\mathbb{C}^m$ with the norm
$$
    \| Y \| = \max_{1 \le j \le m} |y_j|, \quad Y = [y_j]_{j = \overline{1, m}},
$$
the space of complex $m \times m$ matrices $\mathbb{C}^{m \times m}$ with the corresponding induced norm
$$
\| A \| = \max_{1 \le j \le m} \sum_{k = 1}^m |a_{jk} |, \quad A = [a_{jk}]_{j, k = \overline{1, m}}.
$$
The symbols $I_m$ and $0_m$ are used for the unit $m \times m$ matrix and the zero $m \times m$ matrix, respectively.
The symbol $\dagger$ denotes the conjugate transpose.

We use the notation $\mathcal{A}(\mathcal{I}; \mathbb{C}^{m \times m})$ for a class of the matrix functions
$F(x) = [f_{jk}(x)]_{k = \overline{1, m}}$ with entries $f_{jk}(x)$ belonging to the class $\mathcal{A}(\mathcal{I})$
of scalar functions. The symbol $\mathcal{I}$ stands for an interval or a segment.
For example, the potential $Q$ belongs to the class $L((0, \iy); \mathbb{C}^{m \times m})$. 

Denote by $\Pi$ the $\la$-plane with a cut $\la \ge 0$, and $\Pi = \overline{\Pi} \backslash \{ 0 \}$;
note that here $\Pi$ and $\Pi_1$ must be considered as subsets of the Riemann surface
of the square root function. Then, under the map $\rho \to \rho^2 = \la$,
$\Pi_1$ corresponds to the domain $\Omega = \{\rho\colon \mbox{Im} \rho \ge 0, \rho \ne 0 \}$.

Let us introduce the matrix {\it Jost solution} $e(x, \rho)$.
Equation~\eqref{eqv} has a unique matrix solution $e(x, \rho) = [e_{jk}(x, \rho)]_{j, k = 1}^m$,
$\rho \in \Omega$, $x \ge 0$, satisfying the integral equation
\begin{equation} \label{inteqe}
e(x, \rho) = \exp(i \rho x) I_m - \frac{1}{2 i \rho} \int_x^{\iy} (\exp(i \rho (x - t)) - \exp(i \rho (t - x)) Q(t) e(t, \rho) dt.
\end{equation}
The matrix function $e(x, \rho)$ has the following properties:

($i_1$) For $x \to \iy$, $\nu = 0, 1$, and each fixed $\de > 0$,
\begin{equation}  \label{asymptex}
e^{(\nu)}(x, \rho) = (i \rho)^{\nu} \exp(i \rho x) (I_m + o(1)),
\end{equation}
uniformly in $\Omega_{\de} := \{ \mbox{Im}\, \rho \ge 0, \, |\rho| \ge \de\}$.

($i_2$) For $\rho \to \iy$, $\rho \in \Omega$, $\nu = 0, 1$,
\begin{equation}
e^{(\nu)}(x, \rho) = (i \rho)^{\nu} \exp(i \rho x) \left( 1 + \frac{\om(x)}{i \rho} + o(\rho^{-1}) \right),
\quad \om(x) := -\frac{1}{2} \int_x^{\iy} Q(t)\,dt,
\end{equation}
uniformly for $x \ge 0$.

($i_3$) For each fixed $x \ge 0$ and $\nu = 0, 1$, the matrix functions $e^{(\nu)}(x, \rho)$ are analytic 
for $\mbox{Im} \rho > 0$ and continuous for $\rho \in \Omega$.

($i_4$) For $\rho \in \mathbb{R} \backslash \{ 0\}$ the columns of the matrix functions $e(x, \rho)$ and $e(x, -\rho)$
form a fundamental system of solutions for equation \eqref{eqv}.

The construction of the Jost solution in the matrix case was given in the Appendix of \cite{BF14} for even more general situation
of the matrix pencil. In principle,
the the proof is not significantly different from the similar proof in the scalar case (see \cite[Section 2]{FY01}).

Along with $L$ we consider the problem $L^* = L^*(Q(x), h)$ in the form
\begin{equation} \label{eqv*}
\ell^* Z := -Z'' + Z Q(x) = \la Z, \quad x > 0, 
\end{equation}
\begin{equation} \label{IC*}
U^*(Z) := Z'(0) - Z(0)h = 0.
\end{equation}
where $Z$ is a row vector. Denote $\langle Z, Y \rangle := Z'Y - Z Y'$.
If $Y(x, \la)$ and $Z(x, \la)$ satisfy equations \eqref{eqv} and \eqref{eqv*}, respectively, then
\begin{equation} \label{wron}
 	\frac{d}{dx} \langle Z(x, \la), Y(x, \la) \rangle = 0,
\end{equation}
so the expression $\langle Z(x, \la), Y(x, \la) \rangle$ does not depend on $x$.
                                                                            
One can easily construct the Jost solution $e^*(x, \rho)$ of equation \eqref{eqv*}, satisfying the integral equation
\begin{equation} \label{inteqe*}
e^*(x, \rho) = \exp(i \rho x) I_m - \frac{1}{2 i \rho} \int_x^{\iy} \left(\exp(i \rho (x - t)) - \exp(i \rho (t - x)) \right) e^*(t, \rho) Q(t) dt.
\end{equation}
and the same properties ($i_1$)-($i_4$), as $e(x, \rho)$.

If $\rho \in \mathbb{R} \backslash \{0\}$, then
\begin{equation} \label{wronee}
 	\langle e^*(x, -\rho), e(x, \rho) \rangle = - 2 i \rho I_m.
\end{equation}
Indeed, by virtue of $\eqref{wron}$, the expression $\langle e^*(x, -\rho), e(x, \rho) \rangle$
does not depend on $x$. So one can take a limit as $x \to \iy$ and use asymptotics \eqref{asymptex}, in order to 
derive \eqref{wronee}.

Denote $u(\rho) := U(e(x, \rho)) = e'(0, \rho) - h e(0, \rho)$,
$\Delta(\rho) = \det u(\rho)$. 
By property ($i_3$) of the Jost solution, the functions $u(\rho)$ and $\Delta(\rho)$ are
analytic for $\mbox{Im}\,\rho > 0$ and continuous for $\rho \in \Omega$.

Introduce the sets
$$
 	\Lambda = \{ \la = \rho^2 \colon \rho \in \Omega, \quad \Delta(\rho) = 0 \},
$$
$$
 	\Lambda' = \{ \la = \rho^2 \colon \mbox{Im}\,\rho > 0, \quad \Delta(\rho) = 0 \},
$$
$$
 	\Lambda'' = \{ \la = \rho^2 \colon \mbox{Im}\,\rho = 0, \quad \rho \ne 0, \quad \Delta(\rho) = 0 \}.
$$
It is known (see \cite{FY07}), that the spectrum of the boundary value problem $L$ consists of the positive half-line $\{ \la \colon \la \ge 0 \}$
and the discrete bounded set $\Lambda = \Lambda' \cup \Lambda''$. The set of all nonzero eigenvalues coincides with
the at most countable set $\Lambda'$. The points of $\Lambda''$ are called {\it spectral singularities } of $L$.

One can easily show that the Weyl solution and the Weyl matrix admit the following representations
\begin{equation} \label{reprPhie}
 	\Phi(x, \la) = e(x, \rho) (u(\rho))^{-1}, 	
\end{equation}
\begin{equation} \label{reprMe}
M(\la) = e(0, \rho) (u(\rho))^{-1}.
\end{equation}

Clearly, singularities of the Weyl matrix $M(\la)$ coincide with the zeros of $\Delta(\rho)$.

\begin{lem}[\cite{FY07, BF14}] \label{lem:M}
The Weyl matrix is analytic in $\Pi$ outside the countable bounded set of poles $\Lambda'$, and continuous in $\Pi_1$
outside the bounded set $\Lambda$.
For $|\rho| \to \iy$, $\rho \in \Omega$,
\begin{equation} \label{asymptM}
	M(\la) = \frac{1}{i\rho} \left( I_m + \frac{h}{i\rho} + \frac{\kappa(\rho)}{\rho}\right), \quad
	\kappa(\rho) = -i \int_0^{\iy} Q(t) e^{2 i \rho t} dt + O(\rho^{-2}).	  	
\end{equation}
\end{lem}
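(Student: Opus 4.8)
The plan is to exploit the factorization $M(\la) = e(0,\rho)(u(\rho))^{-1}$ from \eqref{reprMe}, so that every assertion about $M$ reduces to a property of the Jost solution $e(x,\rho)$ and of the matrix $u(\rho) = e'(0,\rho) - he(0,\rho)$. The qualitative statements (analyticity, continuity, location of poles) will follow from property $(i_3)$ together with Cramer's rule, while the asymptotic formula \eqref{asymptM} will require a one-step refinement of the expansion in $(i_2)$, carried out directly on the integral equation \eqref{inteqe}.

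For the analytic structure, I would first note that by $(i_3)$ both $e(0,\rho)$ and $e'(0,\rho)$ are analytic for $\mbox{Im}\,\rho > 0$ and continuous for $\rho \in \Omega$; hence so are $u(\rho)$ and $\Delta(\rho) = \det u(\rho)$. Writing $(u(\rho))^{-1} = (\Delta(\rho))^{-1}\,\mbox{adj}\,u(\rho)$, one sees that $M$ inherits analyticity away from the zeros of $\Delta$ on $\{\mbox{Im}\,\rho > 0\}$ and continuity away from those zeros on $\Omega$. Under $\rho \mapsto \rho^2 = \la$ these two sets are exactly $\Pi\setminus\Lambda'$ and $\Pi_1\setminus\Lambda$. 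Since $\mbox{adj}\,u$ is polynomial in the entries of $u$ and $\Delta$ is analytic and not identically zero — its leading behaviour $\Delta(\rho) \sim (i\rho)^m$ follows from the expansion below — its zeros in $\{\mbox{Im}\,\rho > 0\}$ are isolated, so $M$ is meromorphic there and its singularities on $\Lambda'$ are poles. The same leading behaviour confines the zeros of $\Delta$ to a bounded region and makes them at most countable; in any case the discreteness and boundedness of $\Lambda$, $\Lambda'$ are the known spectral facts recalled above.

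For the asymptotics I would compute $e(0,\rho)$ and $e'(0,\rho)$ as $\rho \to \iy$ by inserting the leading approximation $e(t,\rho) = e^{i\rho t}(I_m + O(\rho^{-1}))$, valid uniformly by $(i_2)$, into \eqref{inteqe} and into its $x$-derivative, evaluated at $x = 0$. This isolates the nonoscillatory term governed by $\om(0) = -\tfrac12\int_0^\iy Q(t)\,dt$ together with the oscillatory integral $J(\rho) := \int_0^\iy e^{2i\rho t}Q(t)\,dt$, giving
\begin{align*}
e(0,\rho) &= I_m + \frac{1}{i\rho}\Big(\om(0) + \tfrac12 J(\rho)\Big) + O(\rho^{-2}),\\
u(\rho) &= i\rho\Big(I_m + \frac{1}{i\rho}\big(\om(0) - h - \tfrac12 J(\rho)\big) + O(\rho^{-2})\Big).
\end{align*}
Inverting $u(\rho)$ by a Neumann series and multiplying by $e(0,\rho)$, the $\om(0)$ contributions cancel and one is left with
$$
M(\la) = \frac{1}{i\rho}\Big(I_m + \frac{1}{i\rho}\big(h + J(\rho)\big) + O(\rho^{-2})\Big),
$$
which is \eqref{asymptM} upon setting $\kappa(\rho) = -iJ(\rho) + O(\rho^{-2})$, since $\tfrac{1}{i\rho}J(\rho) = \tfrac{1}{\rho}\kappa(\rho)$ to this order.

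The main obstacle will be justifying the error terms rather than the algebra. Property $(i_2)$ alone yields only an $o(\rho^{-1})$ remainder and hides the oscillatory integral $J(\rho)$, which by the Riemann--Lebesgue lemma tends to $0$ (as $Q \in L(0,\iy)$) but in general no faster than $o(1)$; thus $J(\rho)/\rho$ is an honest $o(\rho^{-1})$ correction that must be displayed explicitly. To reach the stated $O(\rho^{-2})$ precision I would use the uniform bound $e(t,\rho)e^{-i\rho t} - I_m = O(\rho^{-1})$ from $(i_2)$ to estimate the difference between the true integrals and their leading approximations, the integrability of $Q$ ensuring that each such remainder, after the factor $\rho^{-1}$, is $O(\rho^{-2})$ uniformly on $\Omega$.
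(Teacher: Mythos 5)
Your reduction of everything to the Jost solution via \eqref{reprMe} is the standard route (note the paper itself gives no proof of this lemma, quoting it from \cite{FY07, BF14}), and most of what you write is sound: the analytic-structure half (property $(i_3)$, Cramer's rule, $\det u(\rho) = (i\rho)^m(1+o(1))$ confining the zeros of $\Delta$ to a bounded set) is fine, and the displayed expansions of $e(0,\rho)$, of $u(\rho)$, and of their Neumann-series product are correct as stated.

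The genuine gap is in the last sentence of your computation, where you match your formula with \eqref{asymptM}. You establish
\begin{equation*}
 i\rho\, M(\la) = I_m + \frac{1}{i\rho}\bigl(h + J(\rho)\bigr) + O(\rho^{-2}),
\end{equation*}
and assert that this ``is \eqref{asymptM} upon setting $\kappa(\rho) = -iJ(\rho) + O(\rho^{-2})$''. It is not: comparing with $i\rho M(\la) = I_m + h/(i\rho) + \kappa(\rho)/\rho$, your remainder yields $\kappa(\rho) = -iJ(\rho) + \rho \cdot O(\rho^{-2}) = -iJ(\rho) + O(\rho^{-1})$, one full power of $\rho$ weaker than the lemma claims; you have conflated a remainder inside the outer parentheses with a remainder inside $\kappa$. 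Moreover, your one-step refinement cannot do better, since the term it discards in $e(0,\rho)$ is $-\frac{1}{2i\rho}\int_0^{\iy}\bigl(1-e^{2i\rho t}\bigr)Q(t)\bigl(e(t,\rho)e^{-i\rho t}-I_m\bigr)\,dt$, and the uniform bound $O(\rho^{-1})$ on the bracket gives exactly the $O(\rho^{-2})$ you carry. Chasing the stated precision with a second iteration of \eqref{inteqe} produces second-order contributions to $\kappa$ of the form $\rho^{-1}\int_0^{\iy} e^{2i\rho t} Q(t)\bigl(\int_0^t Q(s)\,ds\bigr)\,dt$, i.e.\ $\rho^{-1}$ times Fourier transforms of $L^1$ functions; for $Q$ merely integrable these are $o(1)\cdot\rho^{-1}$ but in general \emph{not} $O(\rho^{-2})$, so the $O(\rho^{-2})$ in \eqref{asymptM} is not reachable by this bookkeeping at the stated generality. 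What your argument actually proves is $\kappa(\rho) = -iJ(\rho) + o(\rho^{-1})$. That weaker form is all the paper ever uses — it implies \eqref{asymptM2}, and it implies that $\kappa$ is square-integrable near infinity when $Q \in L_2$, which is what feeds \eqref{restV} — but you should either state and prove that weaker version explicitly, or impose extra decay/smoothness on $Q$ under which the oscillatory integrals gain the missing power of $\rho$.
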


Let $\vv(x, \la) = [\vv_{jk}(x, \la)]_{j,k =1}^m$ and $S(x, \la) = [S_{jk}(x, \la)]_{j, k = 1}^m$ 
be the matrix solutions of equation~\eqref{eqv} under initial conditions 
$\vv(0, \la) = I_m$, $\vv'(0, \la) = h$, $S(0, \la) = 0_m$, $S'(0, \la) = I_m$.
For each fixed $x \ge 0$, these matrix functions are entire in $\la$-plane. 
Further we also need the following relation
\begin{equation} \label{reprPhiS}
\Phi(x, \la) = S(x, \la) + \vv(x, \la) M(\la).
\end{equation}

Symmetrically one can introduce the matrix solutions $\Phi^*(x, \la)$, $S^*(x, \la)$ and $\vv^*(x, \la)$
of equation \eqref{eqv*}, and the Weyl matrix $M^*(\la) := \Phi^*(0, \la)$ of the problem $L^*$.
Then
\begin{equation} \label{reprPhiS*}
\Phi^*(x, \la) = S^*(x, \la) + M^*(\la) \vv^*(x, \la).
\end{equation}

By virtue of \eqref{wron}, the expression $\langle \Phi^*(x, \la), \Phi(x, \la) \rangle$ does not depend
on $x$. Since by the boundary conditions
$$
 \langle \Phi^*(x, \la), \Phi(x, \la) \rangle_{x = 0} = U^*(\Phi^*) \Phi(0, \la) - \Phi^*(0, \la) U(\Phi) = M(\la) - M^*(\la), 
$$
$$
\lim\limits_{x \to \iy} \langle \Phi^*(x, \la), \Phi(x, \la) \rangle = 0_m, \quad \mbox{Im}\, \rho > 0, 
$$
we have $M(\la) \equiv M^*(\la)$.

Now proceed to the constructive solution of Inverse Problem~1. Let the Weyl matrix $M(\la)$ 
of the boundary value problem $L = L(Q, h)$ be given. Choose an arbitrary model problem
$\tilde L = L(\tilde Q, \tilde h)$ in the same form as $L$, but with other coefficients.
We agree that if a certain symbol $\gamma$ denotes an object related to $L$, then the corresponding symbol
$\tilde \gamma$ with tilde denotes the analogous object related to $\tilde L$.
We consider also the problem $\tilde L^* = L^*(\tilde Q, \tilde h)$. 

Denote 
$$
  M^{\pm}(\la) := \lim_{z \to 0, \, \mbox{Re}\, z > 0} M(\la \pm i z), \quad
  V(\la) := \frac{1}{2\pi i} (M^{-}(\la) - M^{+}(\la)), \quad \la > 0. 
$$
Suppose that the following condition is fulfilled:
\begin{equation} \label{restV}
	\int_{\rho^*}^{\iy} \rho^4 \| {\hat V}(\la) \|^2 d\rho < \iy, \quad \hat V := V - \tilde V,		                                                   		
\end{equation}
for some $\rho^* > 0$. For example, if $Q \in L_2((0, \iy); \mathbb{C}^{m \times m})$, then
$\kappa(\rho)$ in \eqref{asymptM} is $L_2$-function. Therefore one can take any problem $\tilde L$
with a potential from $L((0, \iy); \mathbb{C}^{m \times m}) \cap L_2((0, \iy); \mathbb{C}^{m \times m})$ and $\tilde h = h$, in order to satisfy \eqref{restV}.

Introduce auxiliary functions 
\begin{equation} \label{defD}
 	\tilde D(x, \la, \mu) = \frac{\langle \tilde \vv^*(x, \mu), \tilde \vv(x, \la)}{\la - \mu} =
 	\int_0^x \tilde \vv^*(t, \mu) \tilde \vv(t, \la) \, dt,
\end{equation}
$$
 	\hat M(\mu) = M(\mu) - \tilde M(\mu), \quad \tilde r(x, \la, \mu) = \hat M(\mu) \tilde D(x, \la, \mu).
$$

Let $\ga'$ be a bounded closed contour in $\la$-plane, encircling the set 
of singularities $\Lambda \cup \tilde \Lambda \cup \{ 0 \}$,
let $\ga''$ be the two-sided cut along the ray $\{ \la \colon \la > 0, \, \la \notin \mbox{int}\,\ga' \}$,
and $\ga = \ga' \cup \ga''$ be a contour with the counter-clockwise circuit (see Fig.~1). By contour integration over the contour $\ga$, 
G.~Freiling and V.~Yurko \cite{FY07} have obtained the following result. 

\begin{center}

\setlength{\unitlength}{1mm}
\begin{picture}(50, 40)
\linethickness{0.3mm}
\put(20,0){\vector(0,1){40}}
\put(0,20){\vector(1,0){50}}
\linethickness{0.15mm}
\put(45,22){\vector(-1,0){8}}
\put(31,22){\line(1,0){6}}
\put(31,18){\vector(1,0){7}}
\put(38,18){\line(1,0){7}}
\put(23,30.5){\vector(-2,1){0.5}}
\put(9,31){$\ga$}
\linethickness{0.075mm}
\qbezier(20,31)(10,30)(9,20)
\qbezier(9,20)(10,10)(20,9)
\qbezier(20,31)(30,30)(31,22)
\qbezier(20,9)(30,10)(31,18)

\end{picture}

\medskip

Fig. 1
\end{center}

\begin{thm} \label{thm:main}
For each fixed $x \ge 0$, the following relation holds
\begin{equation} \label{main}
 	\tilde \vv(x, \la) = \vv(x, \la) + \frac{1}{2 \pi i} \int_{\ga} \vv(x, \mu) \tilde r(x, \la, \mu) \, d\mu, 	
\end{equation}
which is called the main equation of Inverse Problem~1. This equation is uniquely solvable
(with respect to $\vv(x, \la)$) in the Banach space
$B$ of continuous bounded on $\ga$ matrix functions
$z(\la) = [z_{jk}(x, \la)]_{j, k = 1}^m$ with the norm
$\| z \|_B = \sup\limits_{\la \in \ga}\max\limits_{j, k = \overline{1, m}} |z_{jk}(\la)|$.
\end{thm}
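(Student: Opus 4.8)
The plan is to prove the two assertions of the theorem in turn: first that the true solution $\varphi(x,\cdot)$ satisfies \eqref{main}, and then that the integral operator occurring in \eqref{main} is boundedly invertible on $B$, so that $\varphi(x,\cdot)$ is its \emph{unique} solution.

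To derive \eqref{main} I would start from the representations \eqref{reprPhiS} and \eqref{reprPhiS*}, which express $\Phi$ and $\Phi^*$ through $\varphi, S, M$ and their duals, together with the identity $M \equiv M^*$ established just above. For fixed $x$, both $\varphi(x,\cdot)$ and $\tilde\varphi(x,\cdot)$ are entire, whereas $M$ is meromorphic in $\Pi$ with poles on $\Lambda'$ and a jump $V(\lambda)$ across the cut, with the asymptotics of Lemma~\ref{lem:M}. I would then form the contour integral over $\gamma$ of the product of $\varphi(x,\mu)$ (equivalently $\Phi(x,\mu)$ after substituting \eqref{reprPhiS}) against the reproducing kernel $\tilde D(x,\lambda,\mu)$ from \eqref{defD}, and evaluate it two ways: by Cauchy's theorem and residues inside $\gamma'$, capturing the poles of $\hat M$, and by the boundary jump across $\gamma''$. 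The asymptotics \eqref{asymptM} force the arc at infinity to contribute nothing, so the integral collapses to the stated relation. This part is a pure contour-deformation computation, carried out in \cite{FY07}.

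For unique solvability, write the equation as $(I + \tilde R(x))\varphi = \tilde\varphi$ with $(\tilde R(x)z)(\lambda) := \frac{1}{2\pi i}\int_\gamma z(\mu)\tilde r(x,\lambda,\mu)\,d\mu$ acting on $B$. The first step is to show $\tilde R(x)$ is compact. On the bounded component $\gamma'$ the kernel $\tilde r = \hat M(\mu)\tilde D(x,\lambda,\mu)$ is jointly continuous, so the associated operator is compact. On the unbounded component $\gamma''$ I would estimate $\tilde D(x,\lambda,\mu)$ via its integral form \eqref{defD} and the oscillation of $\tilde\varphi,\tilde\varphi^*$ for large $\mu$, while controlling $\hat M(\mu)$ through its boundary jump and condition \eqref{restV}; the weight $\rho^4$ in \eqref{restV} is exactly what renders the tail of the kernel square-integrable and the operator approximable by finite-rank truncations. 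I expect this estimate on $\gamma''$ to be the main obstacle, as it is here that the decay of $\hat V$ must be converted into operator-norm smallness of the truncation error on the unbounded contour.

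Once $\tilde R(x)$ is known to be compact, the Fredholm alternative reduces unique solvability to injectivity of $I + \tilde R(x)$. To prove injectivity, suppose $\beta(x,\cdot)\in B$ solves the homogeneous equation. Substituting \eqref{defD} and interchanging the order of integration gives $\beta(x,\lambda) = -\int_0^x \psi(x,t)\tilde\varphi(t,\lambda)\,dt$, where $\psi(x,t) := \frac{1}{2\pi i}\int_\gamma \beta(x,\mu)\hat M(\mu)\tilde\varphi^*(t,\mu)\,d\mu$; in particular $\beta(x,\cdot)$ extends to an entire function of $\lambda$. I would then combine this entire extension with the companion main equation obtained by interchanging $L$ and $\tilde L$ (whose kernel is $-\hat M(\mu)D(x,\lambda,\mu)$ built from the non-tilde solutions) and with the decay furnished by Lemma~\ref{lem:M}, so that a Liouville-type argument yields $\psi \equiv 0$ and hence $\beta \equiv 0$. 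This establishes invertibility of $I + \tilde R(x)$, so \eqref{main} is uniquely solvable in $B$, the solution being $\varphi(x,\cdot)$.
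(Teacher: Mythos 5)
Your first half---deriving \eqref{main} by contour integration based on \eqref{reprPhiS}, \eqref{reprPhiS*} and $M \equiv M^*$---is in the spirit of the paper, which itself defers this computation to \cite{FY07}; the paper's proof of its Corollary carries out exactly this argument for $\Phi(x,\la)$, via the matrix of spectral mappings $P(x,\la)$ and the Cauchy integral representation of $P_{1k}(x,\la)$, so deferring here is acceptable. The genuine gap is in the unique-solvability half. Reducing, via compactness and the Fredholm alternative, to injectivity of $I + \tilde R(x)$ forces you to show that a homogeneous solution $\beta(x,\cdot) \in B$ vanishes, and your Liouville-type argument cannot close this. The entire extension $\beta(x,\la) = -\int_0^x \psi(x,t)\,\tilde\vv(t,\la)\,dt$ satisfies only an estimate of the form $\|\beta(x,\la)\| \le C|\rho|^{-1}\exp(|\tau|x)$, and an entire function of order $1/2$ that is bounded on the contour $\ga$ need not vanish: $\cos\sqrt{\la}$ (that is, $\tilde\vv(x,\la)$ itself for zero potential, fixed $x$) is bounded on the positive half-line and on every bounded set, yet is nonzero. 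Moreover, the ``companion main equation'' you invoke is a statement about the actual solutions $\vv$, $\tilde\vv$ of the two boundary value problems; it says nothing about an arbitrary element of $\ker(I + \tilde R(x))$, so it cannot be fed into an injectivity argument for the homogeneous equation. This is not a removable technicality: in the non-self-adjoint setting there is no positivity to exploit, which is precisely why the paper must \emph{assume} unique solvability as Condition~2 of Theorem~\ref{thm:NSC} in the general characterization, and needs self-adjointness ($\al_k \ge 0$, $V(\la) > 0$) to run the homogeneous-solution argument in Lemma~\ref{lem:mainsolve}.

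The argument that actually proves Theorem~\ref{thm:main} in \cite{FY07} goes the other way around. Since $M(\la)$ is here the Weyl matrix of an existing problem $L$, the roles of $L$ and $\tilde L$ are symmetric: one derives in the same way the interchanged equation $\vv(x,\la) = \tilde\vv(x,\la) - \frac{1}{2\pi i}\int_\ga \tilde\vv(x,\mu)\, r(x,\la,\mu)\,d\mu$, with kernel $r(x,\la,\mu) = \hat M(\mu) D(x,\la,\mu)$ built from the non-tilde solutions, and then proves by a direct contour-integration (residue) computation on the kernels the operator identities $(I + \tilde R(x))(I - R(x)) = (I - R(x))(I + \tilde R(x)) = I$ in $B$. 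This exhibits a bounded two-sided inverse of $I + \tilde R(x)$ at once, giving unique solvability with no appeal to compactness or the Fredholm alternative; conversely, once this identity is available, the Fredholm machinery in your proposal becomes redundant. If you want to salvage your outline, replace the Liouville step by the proof of these kernel identities---that is the missing idea.
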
                    

\begin{cor}
The analogous relation is valid for $\Phi(x, \la)$:
\begin{equation} \label{mainPhi}
 	\tilde \Phi(x, \la) = \Phi(x, \la) + \frac{1}{2\pi i} \int_{\ga}
 	\vv(x, \mu)\hat M(\mu) \frac{\langle  \tilde \vv^*(x, \mu), \tilde \Phi(x, \la) \rangle}{\la - \mu}  d\mu, \quad \la \in J_{\ga},
\end{equation}
where $J_\ga := \{\la\colon \la \notin \ga \cup \mbox{int}\, \ga' \}$.
\end{cor}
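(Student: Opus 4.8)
The natural strategy is to obtain \eqref{mainPhi} directly from the main equation \eqref{main}, rather than repeating the contour integration of \cite{FY07} from scratch. The only structural facts I would use are: (a) the bracket $\langle\cdot,\cdot\rangle$ is linear in its second argument; (b) an $x$-independent matrix factor in the second argument can be pulled out to the right, since $\langle Z, YC\rangle = \langle Z, Y\rangle C$ whenever $C$ does not depend on $x$; and (c) the representation \eqref{reprPhiS} together with its tilde-analogue $\tilde\Phi = \tilde S + \tilde\vv\,\tilde M$ and $\Phi = S + \vv M$. Because $\tilde M(\la)$ is constant in $x$, it can be moved freely in and out of the Wronskian-type bracket.

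First I would multiply \eqref{main} on the right by $\tilde M(\la)$. Using (b) and then (a) with $\tilde\vv(x,\la)\tilde M(\la) = \tilde\Phi(x,\la) - \tilde S(x,\la)$, the factor $\langle\tilde\vv^*(x,\mu),\tilde\vv(x,\la)\rangle\,\tilde M(\la)$ becomes $\langle\tilde\vv^*(x,\mu),\tilde\Phi(x,\la)\rangle - \langle\tilde\vv^*(x,\mu),\tilde S(x,\la)\rangle$. Adding $\tilde S(x,\la)$ to both sides and recognizing $\tilde\vv\,\tilde M + \tilde S = \tilde\Phi$ on the left, the identity already exhibits the desired integral with $\tilde\Phi$ in the kernel. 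Comparing with \eqref{mainPhi} and inserting $\Phi - \vv\tilde M = S + \vv\hat M$ (from \eqref{reprPhiS} and $\hat M = M - \tilde M$), I find that \eqref{mainPhi} is \emph{equivalent} to the companion relation
\begin{equation} \label{mainS}
\tilde S(x,\la) = S(x,\la) + \vv(x,\la)\hat M(\la) + \frac{1}{2\pi i}\int_{\ga}\vv(x,\mu)\hat M(\mu)\frac{\langle\tilde\vv^*(x,\mu),\tilde S(x,\la)\rangle}{\la-\mu}\,d\mu,
\end{equation}
that is, the main equation with the second standard solution $S$ in place of $\vv$. Thus the whole corollary reduces to establishing \eqref{mainS}.

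The main obstacle is therefore \eqref{mainS}, which is not a purely algebraic consequence of \eqref{main}: it must be proved by the same contour-integration argument that yields Theorem~\ref{thm:main}. Here $\tilde S(x,\la)$ plays the role of $\tilde\vv(x,\la)$; it is a solution of $\tilde\ell\,Y=\la Y$, entire in $\la$, so for $\la\in J_\ga$ the kernel $\langle\tilde\vv^*(x,\mu),\tilde S(x,\la)\rangle/(\la-\mu)$ carries no pole at $\mu=\la$ on or inside $\ga$. I would run the residue calculus exactly as for \eqref{main}: integrate $\vv(x,\mu)\hat M(\mu)\langle\tilde\vv^*(x,\mu),\tilde S(x,\la)\rangle/(\la-\mu)$ over $\ga=\ga'\cup\ga''$, collect the contributions of the poles of $M$ and $\tilde M$ inside $\ga'$ together with the jump across the cut $\ga''$, and identify the total with $\tilde S - S - \vv\hat M$. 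The delicate points are those already controlled in \cite{FY07}: convergence of the integral over the unbounded cut $\ga''$, guaranteed by the asymptotics in Lemma~\ref{lem:M} and condition \eqref{restV}, and the uniform estimates on $\vv$, $\tilde\vv^*$, $S$ needed to justify closing the contour. Once \eqref{mainS} holds, \eqref{mainPhi} follows from the equivalence above. Alternatively, one could bypass \eqref{mainS} and simply repeat the derivation of \eqref{main} verbatim with $\tilde\Phi(x,\la)$ substituted for $\tilde\vv(x,\la)$, the only new input being that $\tilde\Phi(x,\la)$ is analytic for $\la\in J_\ga$, since its singularities $\tilde\Lambda'$ lie inside $\ga'$.
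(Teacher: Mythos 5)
Your algebraic reduction is correct: multiplying \eqref{main} on the right by $\tilde M(\la)$, using $\langle Z, YC\rangle = \langle Z,Y\rangle C$ and \eqref{reprPhiS}, one does find that \eqref{mainPhi} is equivalent to the companion relation
\[
\tilde S(x,\la) = S(x,\la) + \vv(x,\la)\hat M(\la) + \frac{1}{2\pi i}\int_{\ga}\vv(x,\mu)\hat M(\mu)\frac{\langle \tilde\vv^*(x,\mu),\tilde S(x,\la)\rangle}{\la-\mu}\,d\mu ,
\]
and this relation is indeed true. (One point you should make explicit: the reduction uses \eqref{main} at points $\la\in J_\ga$, whereas Theorem~\ref{thm:main} asserts it on $\ga$; this is harmless because both sides of \eqref{main} are analytic in $\la$ off the contour --- $\tilde D(x,\la,\mu)$ is entire in $\la$ by \eqref{defD}, and the integral converges uniformly on compact sets under \eqref{restV} --- so the identity extends from $\ga$ to $J_\ga$.)

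The genuine gap is in your claim that this companion relation follows by running the contour-integration argument ``exactly as for \eqref{main},'' with $\tilde S$ in the role of $\tilde\vv$. That argument is not symmetric in the solution being substituted: it rests on the spectral-mapping identity $\vv = P_{11}\tilde\vv + P_{12}\tilde\vv'$, and the analogous identity for the pair $(\tilde S,S)$ is false. Since $\tilde S = \tilde\Phi - \tilde\vv\,\tilde M(\la)$ and $P$ maps $(\tilde\vv,\tilde\Phi)$ to $(\vv,\Phi)$, one gets $P_{11}\tilde S + P_{12}\tilde S' = \Phi - \vv\,\tilde M(\la) = S + \vv(x,\la)\hat M(\la)$, and this is the sole source of the extra term $\vv(x,\la)\hat M(\la)$ in your relation. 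Equivalently, in the residue calculus the difference from \eqref{main} is that $\langle\tilde\vv^*(x,\la),\tilde S(x,\la)\rangle = -I_m \ne 0_m$ while $\langle\tilde\vv^*(x,\la),\tilde\vv(x,\la)\rangle = 0_m$: the kernel now has a genuine pole at $\mu=\la$, whose residue (picked up when the contour is closed through the exterior region containing $\la$) is exactly $\vv(x,\la)\hat M(\la)$. Your outline --- collect only the poles inside $\ga'$ and the jump across $\ga''$ --- supplies no mechanism for this term, and an exact repetition of the \eqref{main} argument produces the identity without it, which is false. By contrast, the alternative you mention in your last sentence is precisely the paper's proof: it writes $\Phi = P_{11}\tilde\Phi + P_{12}\tilde\Phi'$ (valid by the very definition of $P$), inserts the Cauchy representation of $P_{1k}$ and the formulas (3.12) of \cite{FY07}, expands $\Phi(x,\mu)$ and $\tilde\Phi^*(x,\mu)$ via \eqref{reprPhiS}, \eqref{reprPhiS*}, kills the $S(x,\mu)$ and $\tilde S^*(x,\mu)$ terms by Cauchy's theorem, and uses $\tilde M^*\equiv\tilde M$. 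That route works verbatim because $(\tilde\Phi,\Phi)$, unlike $(\tilde S,S)$, is a defining pair of the spectral-mapping matrix; it should be your primary argument, after which the detour through $\tilde S$ is unnecessary.
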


\begin{proof}
Following the proof of Theorem~4.1 from \cite{FY07}, we define a block-matrix of spectral mappings 
$P(x, \la) = [P_{jk}(x, \la)]_{j, k = 1, 2}$ by the relation 
\begin{equation*}
 	P(x, \la) \begin{bmatrix} \tilde \vv(x, \la) & \tilde \Phi(x, \la) \\ \tilde \vv'(x, \la) & \tilde \Phi'(x, \la) \end{bmatrix} =
 	          \begin{bmatrix} \vv(x, \la) & \Phi(x, \la) \\ \vv'(x, \la) & \Phi'(x, \la) \end{bmatrix}.
\end{equation*}
In particular,
$$
 	\Phi(x, \la) = P_{11}(x, \la) \tilde \Phi(x, \la) + P_{12}(x, \la) \tilde \Phi'(x, \la).
$$ 
Substituting formulas (4.4) from \cite{FY07}:
$$
  	P_{1k}(x, \la) = \de_{1k} I_m + \frac{1}{2 \pi i} \int_{\ga} \frac{P_{1k}(x, \mu)}{\la - \mu} d\mu, \quad \la \in J_{\ga},
$$
where $\de_{jk}$ is the Kronecker delta, we get
\begin{equation} \label{smeqPhi}
 	\Phi(x, \la) = \tilde \Phi(x, \la) + \frac{1}{2 \pi i} \int_{\ga} \frac{P_{11}(x, \mu) \tilde \Phi(x, \la) + P_{12}(x, \mu) \tilde \Phi'(x, \la)}{\la - \mu} d\mu, \quad \la \in J_{\ga}.
\end{equation}
Note that the matrix functions $\Phi(x, \la)$ and $\tilde \Phi(x, \la)$ do not have singularities in $J_{\ga}$.

By virtue of the relations (3.12) from \cite{FY07},
$$
 \begin{array}{l}
 P_{11}(x, \mu) = \vv(x, \mu) \tilde {\Phi^*}'(x, \mu) - \Phi(x, \mu) \tilde {\vv^*}'(x, \mu), \\
 P_{12}(x, \mu) = \Phi(x, \mu) \tilde \vv^*(x, \mu) - \vv(x, \mu) \tilde \Phi^*(x, \mu).
 \end{array}
$$
Substitute these relations into \eqref{smeqPhi} and group the terms:
\begin{multline*}
\Phi(x, \la) = \tilde \Phi(x, \la) + \frac{1}{2\pi i} \int\limits_{\ga} \Biggl( \vv(x, \mu)
\left( \tilde {\Phi^*}'(x, \mu) \tilde \Phi(x, \la) - \tilde \Phi^*(x, \mu) \tilde \Phi'(x, \la) \right) \\ -
\Phi(x, \mu) \left( \tilde {\vv^*}'(x, \mu) \tilde \Phi(x, \la) - \tilde \vv^*(x, \mu) \tilde \Phi'(x, \la) \right) \Biggr)\frac{d \mu}{\la - \mu} \\ =
\tilde \Phi(x, \la) + \frac{1}{2 \pi i} \int\limits_{\ga} \frac{\vv(x, \mu) \langle \tilde \Phi^*(x, \mu), \tilde \Phi(x, \la)\rangle-
\Phi(x, \mu) \langle \tilde \vv^*(x, \mu), \tilde \Phi(x, \la) \rangle}{\la - \mu}d \mu.
\end{multline*}
If one expand $\Phi(x, \mu)$ and $\tilde \Phi^*(x, \mu)$, using \eqref{reprPhiS} and \eqref{reprPhiS*},
the terms with $S(x, \mu)$ and $\tilde S^*(x, \mu)$ will be analytic inside the contour and vanish by Cauchy's theorem.
Therefore we get
$$
 \Phi(x, \la) = \tilde \Phi(x, \la) + \frac{1}{2\pi i}\int\limits_{\ga} \frac{\vv(x, \mu) \tilde M^*(\mu) \langle \tilde \vv^*(x, \mu), \tilde \Phi(x, \la)\rangle
 - \vv(x, \mu) M(\mu) \langle \tilde \vv^*(x, \mu), \tilde \Phi(x, \la) \rangle}{\la - \mu}d\mu.
$$  
Since $\tilde M^*(\mu) \equiv \tilde M(\mu)$, we arrive at \eqref{mainPhi}.

\end{proof}

Solving the main equation \eqref{main}, one gets the matrix function $\vv(x, \la)$ and can follow the
algorithm from \cite{FY07} to recover the original problem $L$. But further we need an alternative 
way to construct the potential $Q$ and the coefficient $h$.

Let
\begin{equation} \label{defeps}
 	\eps_0(x) = \frac{1}{2 \pi i} \int_{\ga} \vv(x, \mu) \hat M(\mu) \tilde \vv^*(x, \mu) d \mu, \quad
 	\eps(x) = - 2 \eps_0'(x).
\end{equation}
Then similarly to \cite[Section 2.2]{FY01}, one can obtain the relations
\begin{equation} \label{Qh}
Q(x) = \tilde Q(x) + \eps(x), \quad h = \tilde h - \eps_0(0). 	
\end{equation}
Using the formulas \eqref{defeps}, \eqref{Qh}, one can construct $Q$ and $h$ by the solution of the main 
equation \eqref{main}, and solve Inverse Problem~1.

\bigskip

{\large \bf 3. Necessary and sufficient conditions}\\

In this section, we give the necessary and sufficient conditions in a very general form, 
with requirement of the solvability of the main equation.

Denote by $W$ the class of the matrix functions $M(\la)$, satisfying the conditions of Lemma~\ref{lem:M}, 
namely

($i_1$) $M(\la)$ is analytic in $\Pi$ outside the countable bounded set of poles $\Lambda'$, and continuous in $\Pi_1$
outside the bounded set $\Lambda$;

($i_2$) $M(\la)$ enjoys the asymptotic representation 
\begin{equation} \label{asymptM2}
	M(\la) = \frac{1}{i \rho} \left( I_m + \frac{h}{i \rho} + o(\rho^{-1}) \right), \quad |\rho| \to \iy, \quad \rho \in \Omega.
\end{equation} 

\begin{thm} \label{thm:NSC}
For the matrix function $M(\la) \in W$ to be the Weyl matrix of some boundary value problem $L$
of the form \eqref{eqv}, \eqref{IC}, it is necessary and sufficient to satisfy the following conditions.
\begin{enumerate}
\item There exists a model problem $\tilde L$, such that \eqref{restV} holds.
\item For each fixed $x \ge 0$, the main equation \eqref{main} is uniquely solvable.
\item $\eps(x) \in L((0, \iy); \mathbb{C}^{m \times m})$, where $\eps(x)$ was defined in \eqref{defeps}.
\end{enumerate}
\end{thm}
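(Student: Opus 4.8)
The plan is to prove the two implications separately, treating sufficiency as the substantial direction since the conditions are phrased in terms of the reconstruction procedure itself.

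For necessity, assume $M(\la)$ is already the Weyl matrix of some $L = L(Q,h)$ with $Q \in L((0,\iy);\mathbb{C}^{m\times m})$. Choosing a model problem $\tilde L$ satisfying \eqref{restV} establishes condition~1 as a pure existence statement; at worst the choice $\tilde L = L$ renders $\hat V \equiv 0$, so \eqref{restV} holds trivially, while the remark following \eqref{restV} supplies more practical smooth models when $Q$ has extra integrability. Condition~2 is then immediate from Theorem~\ref{thm:main}, which asserts unique solvability of the main equation \eqref{main} for every genuine Weyl matrix. For condition~3, I would apply the reconstruction formulas \eqref{defeps}--\eqref{Qh} to this solution: they return $\eps(x) = Q(x) - \tilde Q(x)$, a difference of two $L$-functions, whence $\eps \in L((0,\iy);\mathbb{C}^{m\times m})$.

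For sufficiency, the idea is to run the reconstruction algorithm in reverse and then verify that it reproduces $M$. Given $M \in W$ satisfying conditions~1--3, I would fix a model $\tilde L$ as in condition~1 and, for each $x \ge 0$, use condition~2 to solve \eqref{main} for $\varphi(x,\la)$ in the Banach space $B$. Defining $\eps_0, \eps$ by \eqref{defeps} and setting $Q := \tilde Q + \eps$, $h := \tilde h - \eps_0(0)$ as in \eqref{Qh}, condition~3 guarantees $Q \in L((0,\iy);\mathbb{C}^{m\times m})$, so that $L = L(Q,h)$ is a legitimate problem of the form \eqref{eqv}--\eqref{IC}. The verification then proceeds in steps: first establish that the solution $\varphi(x,\la)$ of the main equation is twice differentiable in $x$, entire in $\la$, and satisfies the initial conditions $\varphi(0,\la) = I_m$, $\varphi'(0,\la) = h$ (the latter by differentiating \eqref{defeps} at $x=0$); next, differentiate \eqref{main} twice in $x$ and compare with $\tilde\ell\,\tilde\varphi = \la\tilde\varphi$, so that the terms reorganize into the combination $\tilde Q + \eps = Q$ and yield $\ell\varphi = \la\varphi$. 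Defining $\Phi$ through the companion relation \eqref{mainPhi} of the Corollary, the analogous computation shows $\Phi$ solves \eqref{eqv} and satisfies $U(\Phi) = I_m$. Finally, evaluating at $x=0$ and invoking uniqueness of the Weyl solution, one identifies the Weyl matrix of the constructed $L$ with $\Phi(0,\la)$ and checks, via \eqref{mainPhi} and \eqref{reprPhiS}, that it coincides with the prescribed $M$.

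The main obstacle is the last identification — proving that the constructed $\Phi$ has the correct exponential decay $\Phi(x,\la) = O(\exp(i\rho x))$ as $x \to \iy$, and is therefore the genuine Weyl solution rather than merely some solution with the right boundary data. This is exactly where condition~1, the integrability requirement \eqref{restV}, is indispensable: it controls $\hat M$ and $\hat V$ on the continuous spectrum $\la > 0$, guarantees convergence of the contour integrals defining $\eps_0$ and $\Phi$, and supplies the refinement of the asymptotics of Lemma~\ref{lem:M} needed to pass to the limit $x \to \iy$. The delicate technical point is the two-sided cut $\ga''$ along the positive ray, where $M$ possesses only the boundary values $M^{\pm}$; I would split $\ga = \ga' \cup \ga''$, treat the discrete poles $\Lambda'$ by residues and the continuous part through the jump $V$, and then bound the resulting integrals uniformly in $x$ using \eqref{restV}.
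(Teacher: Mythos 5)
Your skeleton matches the paper's proof: necessity via Theorem~\ref{thm:main} and the reconstruction identity $\eps = Q - \tilde Q$ from \eqref{Qh}, sufficiency by solving \eqref{main}, defining $Q$, $h$ via \eqref{defeps}--\eqref{Qh}, verifying the differential equation and the data at $x=0$, then identifying $\Phi(0,\la) = M(\la)$ through \eqref{mainPhi} and the Cauchy integral formula. However, there is a genuine gap at the central step. Your claim that differentiating \eqref{main} twice and comparing with $\tilde\ell\,\tilde\vv = \la\tilde\vv$ makes the terms ``reorganize \dots and yield $\ell\vv = \la\vv$'' is not what happens: the reorganization only shows that $\eta(x,\la) := \ell\vv(x,\la) - \la\vv(x,\la)$ satisfies the \emph{homogeneous} version of the main equation,
\begin{equation*}
\eta(x,\la) + \frac{1}{2\pi i}\int_{\ga} \eta(x,\mu)\,\tilde r(x,\la,\mu)\,d\mu = 0_m,
\end{equation*}
and nothing in that computation forces $\eta \equiv 0_m$. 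One needs a second, essential use of Condition~2: since the operator $\tilde R(x)$ is compact on $B$, the unique solvability of the inhomogeneous equation \eqref{main} implies, by the Fredholm alternative, that this homogeneous equation admits only the trivial solution. This is precisely the paper's Step~1, and it is the reason Condition~2 appears in the theorem at all beyond merely defining $\vv$; your proposal omits it.

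A second omission: the differentiation argument requires the differentiated integrals to converge and $\eta(\cdot,\la)$ to lie in $B$, which the paper can justify only under the stronger hypothesis $\int \rho\,\|\hat V(\la)\|\,d\la < \iy$. Under the actual hypothesis \eqref{restV}, the paper runs an approximation scheme (its Step~2): truncated contours $\ga_N = \ga \cap \{|\la| \le N^2\}$, operators $\tilde R_N(x) \to \tilde R(x)$ in norm, solutions $\vv_N$ of the truncated equations with potentials $Q_N \to Q$ in $L$-norm on bounded intervals, and a passage to the limit to get \eqref{eqvv}. Your proposal never confronts this convergence issue. Conversely, the step you single out as ``the main obstacle'' --- the bound $\Phi(x,\la) = O(\exp(i\rho x))$ --- is in the paper a comparatively routine consequence of standard estimates for $\vv$, $\tilde\Phi$ and $\hat M(\mu) = O(\mu^{-1})$ substituted into \eqref{mainPhi}; the real difficulty of the sufficiency direction sits in the two points above.
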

 
Similarly one can study the classes of potentials $Q$ with higher degree of smotheness, 
then the potential of the model problem $\tilde Q$ and $\eps$ should belong to the same classes.

\begin{proof}
By necessity, Conditions 1 and 3 are obvious, while Condition~2 is contained in Theorem~\ref{thm:main}.
So it remains to prove, that the potential $Q$ and the coefficient $h$, constructed by formulas \eqref{Qh},
form a problem $L$ with the Weyl matrix, coinciding with the given $M(\la)$.

{\bf Step 1.} Let $M \in W$, the model problem $\tilde L$ satisfy the Condition~1, $\vv(x, \la)$ be 
the solution of the main equation \eqref{main},
 and $Q$, $h$ be constructed via \eqref{Qh}.
Let us prove that 
\begin{equation} \label{eqvv}
\ell \vv(x, \la) = \la \vv(x, \la),
\end{equation}
where the differential expression $\ell$ was defined in \eqref{eqv}.

Differentiating \eqref{main} and using \eqref{defD}, we get
\begin{multline*}
\ell \tilde \vv(x, \la) = \ell \vv(x, \la) + \frac{1}{2 \pi i} \int_{\ga} \ell \vv(x, \mu)
\tilde r(x, \la, \mu) \, d\mu \\ - \frac{1}{2 \pi i} \int_{\ga} \Bigl( 2 \vv'(x, \mu) \hat M(\mu) 
\tilde \vv^*(x, \mu) \tilde \vv(x, \la) \, d\mu + \vv(x, \mu) \hat M(\mu) 
\bigl( \tilde \vv^*(x, \mu) \tilde \vv(x, \la)\bigr)' \Bigr) \, d\mu.
\end{multline*}
Since by \eqref{Qh}
$$
 	Q(x) = \tilde Q(x) - 2 \eps_0'(x) = \tilde Q(x) - \frac{1}{2 \pi i} 
 	\int_{\ga} \Bigl( 2 \vv'(x, \mu) \hat M(\mu) \tilde \vv^*(x, \mu) + 2 \vv(x, \mu) \hat M(\mu)  \tilde {\vv^*}'(x,\mu)  \Bigr)\, d\mu,
$$
we obtain
\begin{equation*}
\tilde \ell \tilde \vv(x, \la) = \ell \vv(x, \la) + \frac{1}{2 \pi i} \int_{\ga} \ell \vv(x, \mu) \tilde r(x, \la, \mu) \, d\mu +
\frac{1}{2\pi i} \int_{\ga} \vv(x, \mu) \hat M(\mu) \langle \tilde \vv^*(x, \mu), \tilde \vv(x, \la) \rangle d\mu.
\end{equation*}
Taking \eqref{defD} and the relation $\tilde \ell \tilde \vv = \la \tilde \vv$ into account, we conclude that
$$
 	\la \tilde \vv(x, \la) = \ell \vv(x, \la) + \frac{1}{2 \pi i} \int_{\ga} \ell \vv(x, \mu) \tilde r(x, \la, \mu) \, d\mu +
 	\frac{1}{2 \pi i} \int_{\ga} (\la - \mu) \vv(x, \mu) \tilde r(x, \la, \mu). 
$$
Substituting \eqref{main} into this relation, we arrive at the equation
\begin{equation} \label{eqeta}
 	\eta(x, \la) + \frac{1}{2 \pi i} \int_{\ga} \eta(x, \mu) \tilde r(x, \la, \mu) \, d\mu = 0_m, \quad \la \in \ga,
\end{equation}
with respect to $\eta(x, \la) = \ell \vv(x, \la) - \la \vv(x, \la)$.

Suppose $\int\limits_{\la^*}^{\iy} \rho \| \hat V(\la) \| d\la < \iy$ (in the general case, under assumption \eqref{restV}, we have
$\int\limits_{\la^*}^{\iy} \| \hat V(\la) \| \, d\la < \iy$, $\la^* = (\rho^*)^2$). Then, using the same arguments, as in the scalar case
\cite{FY01}, one can show, that the matrix function $\eta(x, \la)$ belongs to the Banach space $B$ for each fixed $x \ge 0$.
Consider the operator $\tilde R(x) \colon B \to B$, acting in the following way:
$$
 	z(\la) \tilde R(x) = \frac{1}{2 \pi i} \int_{\ga} z(\mu) \tilde r(x, \la, \mu) \, d\mu. 
$$
Here and below in similar situations, we write an operator to the right of an operand,
because the action of the operator involves noncommutative matrix multiplication in the such order.
For each fixed $x \ge 0$, the operator $\tilde R(x)$ is compact, therefore it follows from the unique solvability of the main equation \eqref{main}, that the corresponding
homogeneous equation \eqref{eqeta} is also uniquely solvable. Hence $\eta(x, \la) \equiv 0$, and \eqref{eqvv} is proved.

\smallskip

{\bf Step 2.} In the general case, when \eqref{restV} holds, the proof of the equality \eqref{eqvv} is more complicated,
so we only outline the main ideas. Introduce contours $\ga_N = \ga \cap \{ |\la| \le N^2 \}$, and consider operators
$$
 	\tilde R_N(x) \colon B \to B, \quad z(\la) \tilde R_N(x) = \frac{1}{2 \pi i} \int_{\ga_N} z(\mu) \tilde r(x, \la, \mu)\, d\mu.
$$  
The sequence $\{ \tilde R_N(x) \}$ converges to $\tilde R(x)$ in the operator norm. In view of the unique solvability of the main equation,
the operator $(I + \tilde R(x))$ is invertible for each fixed $x \ge 0$. So for sufficiently large values of $N$,
the operators $(I + \tilde R_N(x))$ are also invertible, and the equations
$$
 	\tilde \vv(x, \la) = \vv_N(x, \la) + \frac{1}{2 \pi i} \int_{\ga_N} \vv_N(x, \mu) \tilde r(x, \la, \mu) \,d\mu
$$
have unique solutions $\vv_N(x, \la)$. Since $\int\limits_{\rho^*}^N \rho \| \hat V(\la)\| d\la < \iy$, one can repeat the
arguments of Step~1 for the matrix functions $\vv_N(x, \la)$, and prove the relations
$-\vv_N''(x, \la) + Q_N(x) \vv_N(x, \la) = \la \vv_N(x, \la)$ with matrix potentials $Q_N(x) = Q(x) + \eps_N(x)$, where
$$
 	\eps_N(x) = -2 \left( \frac{1}{2 \pi i} \int_{\ga_N} \vv_N(x, \mu) \hat M(\mu) \tilde \vv^*(x, \mu) \, d\mu\right)'.
$$
The sequence $\{ \vv_N(x, \la) \}$ converges to $\vv(x, \la)$ uniformly with respect to $x$ and $\la$ on compact sets,
and the sequence $\{ Q_N(x) \}$ converges to $Q(x)$ in $L$-norm on every bounded interval. These facts yield \eqref{eqvv}.

Analogously one can prove the relation $\ell \Phi(x, \la) = \la \Phi(x, \la)$
for the matrix function $\Phi(x, \la)$, constructed via \eqref{mainPhi}.

\smallskip

{\bf Step 3.} Substituting $x = 0$ into the main equation \eqref{main}, we get 
$\vv(0, \la) = I_m$. Differentiate the main equation:
$$
	\tilde \vv'(x, \la) = \vv'(x, \la) + \frac{1}{2\pi i} \int_{\ga} \vv'(x, \mu) \tilde r(x, \la, \mu) \, d\mu + 
	\frac{1}{2 \pi i} \int_{\ga} \vv(x, \mu) \hat M(\mu) \tilde \vv^*(x, \mu) \tilde \vv(x, \la) \, d\mu.
$$
Taking \eqref{defD}, \eqref{defeps} and \eqref{Qh} into account, we obtain
$$
 	\vv'(0, \la) = \tilde \vv'(0, \la) - \frac{1}{2 \pi i} \int_{\ga} \vv(0, \mu) \hat M(\mu) \tilde \vv^*(0, \mu) \, d\mu = \tilde h - \eps_0(0) = h.
$$
Similarly using \eqref{mainPhi}, one can check that $U(\Phi) = I_m$.

The following standard estimates are valid for $\nu = 0, 1$:
$$
 	\| \vv^{(\nu)}(x, \mu)\|, \, \| \vv^{*(\nu)}(x, \mu) \| \le C |\theta|^{\nu}, \quad \mu = \theta^2 \in \ga, 
$$
$$
 	\|\tilde \Phi^{(\nu)}(x, \la) \| \le C |\rho|^{\nu - 1} \exp(-|\tau|x), \quad \la \notin \tilde \Lambda \cup \{ 0 \}.
$$
In view of \eqref{asymptM2}, $\hat M(\mu) = O(\mu^{-1})$, $|\mu| \to \iy$, $\mu \in \Pi$. Taking $\la \notin \mbox{int} \ga$
and substituting these estimates into \eqref{mainPhi}, we derive
$$
 	\| \Phi(x, \la) \exp(- i \rho x) \| \le C \left( 1 + \int_{\la^*}^{\iy} \frac{d\mu}{\mu |\la - \mu|} \right) \le C_1.
$$ 
Thus, we have $\Phi(x, \la) = O(\exp(i \rho x))$, so $\Phi(x, \la)$, constructed via \eqref{mainPhi}, is the Weyl solution 
of the problem $L(Q, h)$.

It follows from \eqref{mainPhi}, that
$$
 	\Phi(0, \la) = \tilde M(\la) + \frac{1}{2 \pi i} \int_{\ga} \frac{\hat M(\la)}{\la - \mu}d\mu.
$$
Using Cauchy integral formula, it is easy to show that 
$$
 	\hat M(\la) = \frac{1}{2\pi i} \int_{\ga} \frac{\hat M(\mu)}{\la - \mu} d\mu.
$$
Consequently, $\Phi(0, \la) = M(\la)$, i.e. the given matrix $M(\la)$ is the Weyl matrix of the constructed 
boundary value problem $L(Q, h)$.
\end{proof}

\bigskip

{\large \bf 4. Self-adjoint case: properties of the spectral data}\\

In this section, we assume that the boundary value problem $L$ is self-adjoint: $Q(x) = Q^{\dagger}(x)$ a.e. on $(0, \iy)$, $h = h^{\dagger}$.
We show that its spectrum has the following properties ($i_1$)-($i_6$). The similar facts for the Dirichlet boundary condition were
proved in \cite{AM60}. 

{\bf Property ($i_1$).} {\it The problem $L$ does not have spectral singularities: $\Lambda'' = \varnothing$.}

\begin{proof}
We have to prove that $\det u(\rho) \ne 0$ for $\rho \in \mathbb{R} \backslash \{ 0 \}$.
In view of \eqref{inteqe} and \eqref{inteqe*}, 
$$
 	e^{\dagger}(x, \rho) = \exp(- i\rho x) \, I_m + \frac{1}{2 i \rho} \int_x^{\iy} \left(\exp(-i \rho (x - t) ) -
 	\exp(i \rho (x - t))\right) e^{\dagger}(t, \rho) Q(t) \, dt = e^*(x, - \rho) 
$$
for $\rho \in \mathbb{R} \backslash \{ 0 \}$,
therefore $u^{\dagger}(\rho) = u^*(-\rho)$ for such $\rho$. Suppose there exist a real $\rho_0 \ne 0$
and a nonzero vector $a$, such that $u(\rho_0) a = 0$ and, consequently, $a^{\dagger} u^*(- \rho_0) = 0$.
On the one hand,
$$
 	a^{\dagger} \langle e^*(x, -\rho_0), e(x, \rho_0) \rangle a = [ a^{\dagger} u^*(-\rho_0) ] e(0, \rho_0) a -
 	a^{\dagger} e^*(0, -\rho_0) [u(\rho_0) a] = 0.
$$
On the other hand, using \eqref{wronee}, we obtain
$$
a^{\dagger} \langle e^*(x, -\rho_0), e(x, \rho_0) \rangle a = -2 i\rho_0 a^{\dagger} a \ne 0.
$$
So we arrive at the contradiction, which proves the property. 
\end{proof}

{\bf Property ($i_2$).} {\it All the nonzero eigenvalues are real and negative: $\la_k = \rho_k^2 < 0$, $\rho_k = i \tau_k$, $\tau_k > 0$.}

Indeed, the eigenvalues of $L$ are real because of the self-adjointness. In view of \cite[Theorem 2.4]{FY07}, they cannot be positive.

{\bf Property ($i_3$).} {\it The poles of the matrix function $(u(\rho))^{-1}$ in the upper half-plane are simple.
(They coincide with $i \tau_k$).}

\begin{proof}
Start from the relations
\begin{equation} \label{eqve}
-e''(x, \rho) + Q(x) e(x, \rho) = \rho^2 e(x, \rho),
\end{equation}
\begin{equation} \label{eqve*}
-{e^*}''(x, \rho) + e^*(x, \rho) Q(x) = \rho^2 e^*(x, \rho).
\end{equation}
Differentiate \eqref{eqve*} by $\rho$, multiply it by $e(x, \rho)$ and subtract \eqref{eqve}, multiplied by $\frac{d}{d\rho}e^*(x, \rho)$
from the left:
\begin{equation*} 
 	\frac{d}{d \rho} e^*(x, \rho) e''(x, \rho) - \frac{d}{d\rho} {e^*}''(x, \rho) e(x, \rho) = 2 \rho\, e^*(x, \rho) e(x, \rho).
\end{equation*}
Note the the left-hand side of this relation equals $-\frac{d}{dx}\langle \frac{d}{d\rho}e^*(x, \rho), e(x, \rho)\rangle$.
Integrating by $x$ from $0$ to $\iy$, we get
$$
 \langle \frac{d}{d\rho}e^*(x, \rho), e(x, \rho)\rangle \bigg|_0^{\iy} = 2 \rho \int_0^{\iy} e^*(x, \rho) e(x, \rho) \, dx.	
$$
Let $\mbox{Im}\, \rho > 0$. Then the matrix functions $e(x, \rho)$, $\frac{d}{d\rho}e^*(x, \rho)$ and their
$x$-derivatives tend to zero as $x \to \iy$. Consequently, we obtain
\begin{equation} \label{smeqwron2}
 \frac{d}{d\rho}u^*(\rho) e(0, \rho) - \frac{d}{d\rho}e^*(0, \rho) u(\rho) = 2 \rho \int_0^{\iy} e^*(x, \rho) e(x, \rho) \, dx.
\end{equation}

Let $\rho$ be equal to $\rho_0 = \sqrt \la_0$, where $\la_0$ is an eigenvalue, and $u(\rho_0) a = 0$, $a \ne 0$. For purely imaginary $\rho$, one has
$e^*(x, \rho) = e^{\dagger}(x, \rho)$ and $u^*(\rho) = u^{\dagger}(\rho)$. So we derive from \eqref{smeqwron2}:
\begin{equation} \label{smeqwron3}
 	-a^{\dagger} \frac{d}{d\rho}u^{\dagger}(\rho_0) e(0, \rho_0) a = 2 \rho_0 a^{\dagger} \int_0^{\iy} e^{\dagger}(x, \rho_0) e(x, \rho_0) \, dx \, a \ne 0.
\end{equation}

In order to prove the simplicity of the poles for $(u(\rho))^{-1}$, we adapt Lemma~2.2.1 from \cite{AM60}:
\smallskip
{\it
The inverse $(u(\rho))^{-1}$ has a simple pole at $\rho = \rho_0$, if and only if the relations
\begin{equation}  \label{AMab}
u(\rho_0) a = 0, \quad u(\rho_0) b + \frac{d}{d\rho} u(\rho_0) a = 0,
\end{equation}
where $a$ and $b$ are constant vectors, yield $a = 0$.
}
\smallskip

Let vectors $a$ and $b$ satisfy \eqref{AMab}. Then 
$$
-a^{\dagger} \frac{d}{d\rho}u^{\dagger}(\rho_0) e(0, \rho_0) a = b^{\dagger} u^{\dagger}(\rho_0) e(0, \rho_0) a. 
$$
Since
$$
  \langle e^*(x, \rho), e(x, \rho) \rangle = \langle e^*(x, \rho), e(x, \rho) \rangle_{x = \iy} = 0, \quad \mbox{Im}\,\rho > 0,
$$
one has
$$
   b^{\dagger} u^{\dagger}(\rho_0) e(0, \rho_0) a = b^{\dagger} e^*(0, \rho_0) u(\rho_0) a = 0,
$$
but this contradicts \eqref{smeqwron3}. So $a = 0$, and a square root of every eigenvalue $\rho = \rho_0$ is a simple pole of $(u(\rho))^{-1}$.
\end{proof}

The next properties take place, if the additional condition holds:
\begin{equation} \label{xQx}
\int_0^{\iy} x \| Q(x) \| \, dx < \iy.
\end{equation}

{\bf Property ($i_4$).} {\it The number of eigenvalues is finite.}

\begin{proof}
Prove the assertion by contradiction. Suppose there is an infinite sequence $\{ \la_k \}_{k = 1}^{\iy}$ of negative eigenvalues,
$\rho_k = \sqrt \la_k$, and $\{ Y_k(x) \}_{k = 1}^{\iy}$ is an ortogonal sequence of corresponding vector eigenfunctions. Note that
there can be multiple eigenvalues, their multiplicities are finite and equal to $m - \mbox{rank}\, u(\rho_k)$. Multiple eigenvalues
occur in the sequence $\{ \la_k\}_{k = 1}^{\iy}$ multiple times with different eigenfunctions $Y_k(x)$. The eigenfunctions
can be represented in the form $Y_k(x) = e(x, \rho_k) N_k$, $\| N_k \| = 1$. 

Using the ortogonality of the eigenfunctions, we obtain
for $k \ne n$
\begin{multline} \label{smeqintY}
0 = \int_0^{\iy} Y_k^{\dagger}(x) Y_n(x) \, dx =  N_k^{\dagger}\int_A^{\iy}	e^{\dagger}(x, \rho_k) e(x, \rho_n) \, dx \, N_n +
\int_0^{A} Y_k^{\dagger}(x)	 Y_k(x) \, dx \\
+ \int_0^A Y^{\dagger}_k(x) (Y_n(x) - Y_k(x)) \, dx =: \mathcal{I}_1 + \mathcal{I}_2 + \mathcal{I}_3.
\end{multline}

Similarly to the scalar case \cite[Theorem 2.3.4]{FY01}, one can show that $e(x, \rho_k) = \exp(-\tau_k x) (I_m + \al_k(x))$,
where $\| \al_k(x) \| \le \frac{1}{8}$ as $x \ge A$ for all $k \ge 1$ and for sufficiently large $A$. Consequently
\begin{multline*} 
 \mathcal{I}_1 = N_k^{\dagger} \int_A^{\iy} \exp(-(\tau_k + \tau_n)x) (I_m + \beta_{kn}(x)) \, dx \, N_k \\ + 
N_k^{\dagger} \int_A^{\iy} \exp(-(\tau_k + \tau_n)x) (I_m + \beta_{kn}(x)) \, dx \, (N_n - N_k), \quad \| \beta_{kn}(x) \| \le \frac{3}{8}. 
\end{multline*}
Since the vectors $N_k$ belong to the unit sphere, one can choose a convergent subsequence $\{ N_{k_s} \}_{s = 1}^{\iy}$.
Further we consider $N_k$ and $N_n$ from such subsequence. Then for sufficiently large $k$ and $n$, we have
\begin{multline*}
 \left|  N_k^{\dagger} \int_A^{\iy} \exp(-(\tau_k + \tau_n)x) (I_m + \beta_{kn}(x)) \, dx \, (N_n - N_k) \right| \\ \le
 \frac{3 \exp(-(\tau_k + \tau_n) A)}{2 (\tau_k + \tau_n)} \| N_n - N_k \| \le \frac{\exp(-(\tau_k + \tau_n) A)}{8 (\tau_k + \tau_n)}.
\end{multline*} 
Hence
$$
 	\mathcal{I}_1 \ge \frac{\exp(-(\tau_k + \tau_n) A)}{2 (\tau_k + \tau_n)}  \ge \frac{\exp(-2AT)}{4T}, \quad T := \max\limits_k \tau_k.
$$

Clearly, $\mathcal{I}_2 \ge 0$.
Using arguments similar to the proof of \cite[Theorem 2.3.4]{FY01}, one can show that $\mathcal{I}_3$ tends to zero, as $k, n \to \iy$.
Thus, for sufficiently large $k$ and $n$, $\mathcal{I}_1 + \mathcal{I}_2 + \mathcal{I}_3 > 0$, that contradicts \eqref{smeqintY}.
Hence, the number of negative eigenvalues is finite.
\end{proof}

{\bf Property ($i_5$).} {\it $\la = 0$ is not an eigenvalue of $L$.}

\begin{proof}
It was proved in \cite{AM60}, that if condition \eqref{xQx} holds, the Jost solution $e(x, \rho)$ exists for $\rho = 0$.
So equation \eqref{eqv} for $\la = 0$ has the solution $e(x, 0) = I_m + o(1)$, as $x \to \iy$. One can easily check that
the matrix function
$$
 	z(x) = e(x, 0) \int_0^x \left(e^*(t, 0) e(t, 0) \right)^{-1} dt
$$ 
is also a solution of \eqref{eqv} for $\la = 0$, and it enjoys asymptotic representation $z(x) = x (I_m + o(1))$, as $x \to \iy$.
Thus, the columns of the matrices $e(x, 0)$ and $z(x)$ form a fundamental system of solutions for equation \eqref{eqv} for $\la = 0$.
If $\la = 0$ is an eigenvalue of $L$, then the corresponding vector eigenfunction should have an expansion
$Y_0(x) = e(x, 0) a + z(x) b$, where $a$ and $b$ are some constant vectors. But in view of asymptotic behavior of $e(x, 0)$
and $z(x)$, one has $\lim\limits_{x \to \iy} Y_0(x) = 0$ if and only if $a = b = 0$. So $\la = 0$ is not an eigenvalue of $L$.
\end{proof}

{\bf Property ($i_6$).} {\it $\rho (u(\rho))^{-1} = O(1)$ and $M(\la) = O(\rho^{-1})$ as $\rho \to 0$, $\rho \in \Omega$.}

\begin{proof}
Consider the matrix function 
$g(\rho) = 2 i \rho (u(\rho))^{-1}$.
It follows from \eqref{wronee}, that
$$
 	u^*(-\rho) e(0, \rho) - e^*(0, -\rho) u(\rho) = -2 i \rho I_m.
$$
Hence
$$
 	g(\rho) = e^*(0, -\rho) - u^*(-\rho) e(0, \rho) (u(\rho))^{-1}.
$$
In view of \eqref{reprMe} and the equality $M(\la) \equiv M^*(\la)$, one has 
$$
 	e(0, \rho) (u(\rho))^{-1} = M(\la) = (u^*(\rho))^{-1} e^*(0, \rho).
$$
Consequently, 
\begin{equation} \label{smeqg}
 	g(\rho) = e^*(0, -\rho) - \xi(\rho) e^*(0, \rho), \quad \xi(\rho) := u^*(-\rho) (u^*(\rho))^{-1}.
\end{equation}

Let $\rho \in \mathbb{R} \backslash \{ 0 \}$.
Expand the solution $\vv(x, \la)$ by the fundamental system of solutions with some matrix coefficients $A(\rho)$ and $B(\rho)$:
\begin{align}
\label{expandphi}
\vv(x, \la) & = e(x, \rho) A(\rho) + e(x, -\rho) B(\rho), \\
\label{expandphid}
\vv'(x, \la) & = e'(x, \rho) A(\rho) + e'(x, -\rho) B(\rho).
\end{align}
Multiplying \eqref{expandphi} by ${e^*}'(x, -\rho)$ and \eqref{expandphid} by $e^*(x, \rho)$ from the left and using \eqref{wronee},
one can derive
\begin{align*}
 	A(\rho) & = -\frac{1}{2 i \rho} \left( {e^*}'(x, -\rho)\vv(x, \la) - e^*(x, -\rho) \vv'(x, \la) \right), \\
 	B(\rho) & = \frac{1}{2 i \rho} \left( {e^*}'(x, \rho) \vv(x, \la) - e^*(x, \rho) \vv'(x, \la) \right).
\end{align*}
Since $A(\rho)$ and $B(\rho)$ do not depend on $x$, one can take $x = 0$ and obtain
$$
 	A(\rho) = -\frac{1}{2\pi i} u^*(-\rho), \quad B(\rho) = \frac{1}{2 i \rho} u^*(\rho).
$$
Finally 
$$
 	\vv(x, \la) = -\frac{1}{2 i \rho} \left( e(x, \rho) u^*(-\rho) - e(x, -\rho) u^*(\rho) \right).
$$
Since $U(\vv) = 0$, we get
$$
 	u(\rho) u^*(-\rho) = u(-\rho) u^*(\rho).
$$
Therefore
$$
 	\xi(\rho) = u^*(-\rho) (u^*(\rho))^{-1} = (u(\rho))^{-1} u(-\rho).
$$  
One can easily show that $u^*(\rho) = u^{\dagger}(-\rho)$ for real $\rho$ and, consequently, $\xi(\rho)$
is a unitary matrix for $\rho \in \mathbb{R} \backslash \{ 0 \}$. Then it follows from \eqref{smeqg},
that the matrix function $g(\rho)$ is bounded for $\rho \in \mathbb{R} \backslash \{ 0 \}$.

Consider the region $\mathcal{D} := \{ \rho \colon \mbox{Im}\, \rho > 0, |\rho| < \tau^* \}$, where $\tau^*$
is a number less than all $\tau_k$ (by property ($i_4$), there is a finite number of them).
Obviously, $g(\rho)$ is analytic in $\mathcal{D}$ and continuous in $\overline{\mathcal{D}} \backslash \{ 0 \}$.
If it is also analytic in zero, $\rho = 0$ is a removable singularity. Then $g(\rho)$ is continuous in $\overline{\mathcal{D}}$,
so $g(\rho) = O(1)$. In the general case, one can approximate the potential $Q(x)$ by the sequence of potentials
$$
   Q_{\beta}(x) = \begin{cases}
                Q(x), \quad 0 \le x \le \beta \\
                0, \quad x > \beta,
   			\end{cases}
$$
and use the technique from \cite{AM60} (see Lemma~2.4.1). 

Since under condition \eqref{xQx} the Jost solution exists for $\rho = 0$, 
we have $e(0, \rho) = O(1)$ as $\rho \to 0$. Taking \eqref{reprMe} and $g(\rho) = O(1)$ into account, we arrive at $M(\la) = O(\rho^{-1})$,
$\rho \to 0$.
\end{proof}

We combine the properties of the Weyl matrix in the next theorem.

\begin{thm}
Let $L = L(Q, h)$, $Q = Q^{\dagger}$, $h = h^{\dagger}$, $Q \in L((0, \iy); \mathbb{C}^{m \times m})$,
and the condition \eqref{xQx} holds. Then the Weyl matrix of this problem $M(\la)$ is analytic in $\Pi$
outside the finite set of simple poles $\Lambda' = \{ \la_k \}_{k = 1}^P$, $\la_k = \rho_k^2 < 0$, and continuous in $\Pi_1\backslash \Lambda$.
Moreover, 
$$
 	\al_k := \Res_{\la = \la_k} M(\la) \ge 0, \quad k = \overline{1, P},
$$
$$
 	M(\la) = O(\rho^{-1}), \quad \rho \to 0.
$$
The matrix function $\rho V(\la)$ is continuous and bounded for $\la > 0$ and $V(\la) > 0$ for $\la > 0$.
\end{thm}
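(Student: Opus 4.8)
The plan is to read off the structural assertions from the already-established Lemma~\ref{lem:M} and Properties $(i_1)$--$(i_6)$, and to prove the two genuinely new positivity statements — positivity of the residues $\al_k$ and of the jump $V(\la)$ — by exploiting the Herglotz-type structure that self-adjointness forces on $M$. First I would dispose of the qualitative claims: analyticity of $M$ in $\Pi$ off its poles and continuity in $\Pi_1\setminus\Lambda$ come from Lemma~\ref{lem:M}; Property $(i_1)$ ($\Lambda''=\varnothing$) gives $\Lambda=\Lambda'$ and continuity up to the cut $\la>0$; Property $(i_3)$ makes the poles simple; Property $(i_4)$ makes their number $P$ finite; Property $(i_2)$ places them at $\la_k=\rho_k^2<0$; and $M(\la)=O(\rho^{-1})$ as $\rho\to0$ is exactly Property $(i_6)$.

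The key preliminary for the positivity is the symmetry $M(\bar\la)=M(\la)^{\dagger}$. I would derive it by taking the conjugate transpose of the integral equation \eqref{inteqe} and using $Q=Q^{\dagger}$, $h=h^{\dagger}$ to identify $e^{\dagger}(x,\rho)=e^*(x,-\bar\rho)$, hence $u^{\dagger}(\rho)=u^*(-\bar\rho)$; combined with $M\equiv M^*$ and \eqref{reprMe} this yields $M(\la)^{\dagger}=M(\bar\la)$, so in particular each $\al_k$ is Hermitian. Next I would establish the Lagrange-identity representation: for $\mbox{Im}\,\la>0$, integrating $\frac{d}{dx}\bigl(\Phi^{\dagger}\Phi'-\Phi'^{\dagger}\Phi\bigr)=(\bar\la-\la)\Phi^{\dagger}\Phi$ from $0$ to $\iy$, using the decay of $\Phi$ at infinity and $\Phi'(0,\la)=I_m+hM(\la)$ together with $h=h^{\dagger}$ (so the $hM$-terms cancel at $x=0$), gives $\mbox{Im}\,M(\la)=-\mbox{Im}\,\la\int_0^{\iy}\Phi(x,\la)^{\dagger}\Phi(x,\la)\,dx$, i.e. $M$ is anti-Herglotz.

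For the residues I would insert the Laurent expansion $M(\la)=\al_k(\la-\la_k)^{-1}+O(1)$ into this identity and let $\la\to\la_k$ nontangentially from the upper half-plane: since $\Phi(x,\la)\sim C(x)(\la-\la_k)^{-1}$ with $C(x)=2\rho_k\,e(x,\rho_k)R_k$ and $R_k$ the residue of $(u(\rho))^{-1}$ at $\rho_k$, one obtains $\al_k=\lim_{\la\to\la_k}|\la-\la_k|^2\int_0^{\iy}\Phi^{\dagger}\Phi\,dx=\int_0^{\iy}C(x)^{\dagger}C(x)\,dx\ge0$, a manifestly positive semidefinite norming matrix of the eigenfunctions. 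For the jump I would first match the edges of the cut on the Riemann surface, $M^{\pm}(\la)$ corresponding to $\rho=\pm\sqrt\la$, and then compute the Wronskian sandwich $u^*(\rho)(M^+-M^-)u(-\rho)=-\langle e^*(\cdot,\rho),e(\cdot,-\rho)\rangle=-2i\rho I_m$, the last value being read off from the asymptotics \eqref{asymptex} as $x\to\iy$. Solving gives $V(\la)=\frac{\rho}{\pi}(u^*(\rho))^{-1}(u(-\rho))^{-1}$, and using $u^*(\rho)=u^{\dagger}(-\rho)$ this becomes $V(\la)=\frac{\rho}{\pi}\bigl((u(-\rho))^{-1}\bigr)^{\dagger}(u(-\rho))^{-1}>0$, positive definite because $\det u(-\rho)\ne0$ for $\rho\in\mathbb{R}\setminus\{0\}$ by Property $(i_1)$. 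Continuity and boundedness of $\rho V$ then follow from continuity of $(u(\rho))^{-1}$ on the real axis, the asymptotics $u(\rho)\sim i\rho I_m$ as $\rho\to\iy$, and Property $(i_6)$ as $\rho\to0$.

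I expect the main obstacle to be the residue positivity: one must first secure that $\al_k$ is Hermitian and then control the $0\cdot\iy$ balance in the limit $\la\to\la_k$ of $|\la-\la_k|^2\int_0^{\iy}\Phi^{\dagger}\Phi\,dx$, matching the Laurent coefficient against the diverging norm integral. The symmetry step and the anti-Herglotz identity are precisely the tools that make this rigorous, whereas the assertion $V(\la)>0$ reduces to the clean finite Wronskian computation above.
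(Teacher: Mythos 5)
Your proposal is correct, and for the residue positivity it takes a genuinely different route from the paper; the $V(\la)>0$ part, by contrast, is essentially the paper's own computation. For $\al_k\ge 0$ the paper never invokes the Herglotz structure of $M$: it takes residues at $\la_k$ in the two representations \eqref{reprPhie} and \eqref{reprPhiS} of $\Phi$, obtaining $\vv(x,\la_k)\al_k=e(x,\rho_k)u_k$ with $u_k=2\rho_k\Res_{\rho=\rho_k}(u(\rho))^{-1}$, then differentiates the Wronskian $\langle e^*(x,\rho_k),e(x,\rho)\rangle$ in $x$, integrates over $(0,\iy)$, and lets $\la\to\la_k$ along the negative real axis (purely imaginary $\rho$, where $e^*=e^{\dagger}$) to identify $\al_k^{\dagger}$ with the Gram matrix $u_k^{\dagger}\int_0^{\iy}e^{\dagger}(x,\rho_k)e(x,\rho_k)\,dx\,u_k\ge 0$. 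Your route --- the symmetry $M(\bar\la)=M^{\dagger}(\la)$ plus the Lagrange identity $\mbox{Im}\,M(\la)=-\mbox{Im}\,\la\int_0^{\iy}\Phi^{\dagger}\Phi\,dx$ and a limit at the pole --- lands on exactly the same norming matrix (your $C(x)$ equals $e(x,\rho_k)u_k$), but it replaces the paper's Wronskian manipulation by the standard matrix-Nevanlinna argument; what this buys is robustness (the anti-Herglotz identity by itself forces real poles to be simple with Hermitian nonnegative residues, so in principle it subsumes Property~($i_3$) rather than using it as input), whereas the paper's argument stays entirely at the level of Jost solutions. For $V$, your formula $V(\la)=\frac{\rho}{\pi}(u^*(\rho))^{-1}(u(-\rho))^{-1}$ and the paper's $V(\la)=\frac{\rho}{\pi}(u^*(-\rho))^{-1}(u(\rho))^{-1}$ agree via the relation $u(\rho)u^*(-\rho)=u(-\rho)u^*(\rho)$ established in the proof of Property~($i_6$); both are the same Wronskian computation based on \eqref{wronee} and both give positive definiteness from $\det u(\rho)\ne 0$ on $\mathbb{R}\setminus\{0\}$ (Property~($i_1$)).

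One link in your chain should be tightened: the identification $\lim_{\la\to\la_k}|\la-\la_k|^2\int_0^{\iy}\Phi^{\dagger}\Phi\,dx=\int_0^{\iy}C^{\dagger}(x)C(x)\,dx$ needs a dominated-convergence justification (uniform exponential decay of $e(x,\rho)$ for $\rho$ near $\rho_k$). However, positivity does not require this identification at all: since all Laurent coefficients of $M$ at $\la_k$ are Hermitian (by your symmetry), the regular part $G(\la)=M(\la)-\al_k(\la-\la_k)^{-1}$ satisfies $\mbox{Im}\,G(\la)=O(\mbox{Im}\,\la)$ near $\la_k$, whence $|\la-\la_k|^2\int_0^{\iy}\Phi^{\dagger}\Phi\,dx=\al_k+O(|\la-\la_k|^2)$; thus $\al_k$ is a limit of positive semidefinite matrices and $\al_k\ge 0$, with no nontangential restriction and no interchange of limit and integral needed.
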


\begin{proof}
Fix an eigenvalue $\la_k$, $k = \overline{1, P}$.
Cosider two representations \eqref{reprPhie} and \eqref{reprPhiS} of $\Phi(x, \la)$, and take for both of them
the residue with respect to the pole $\la_k$. Then we obtain the relation
\begin{equation} \label{eigenf}
\vv(x, \la_k) \al_k = e(x, \rho_k) u_k, \quad u_k := 2 \rho_k \Res_{\rho = \rho_k} (u(\rho))^{-1}.
\end{equation}
Note that the columns of the left-hand side and the right-hand side of \eqref{eigenf} are vector
eigenfunctions, corresponding to the eigenvalue $\la_k$.	

Further we consider $\rho$, such that $\mbox{Re} \, \rho = 0$, $\mbox{Im}\, \rho > 0$.
It is easy to check, that
$$
 	\langle e^*(x, \rho_k), e(x, \rho) \rangle_{x = \iy} - \langle e^*(x, \rho_k), e(x, \rho) \rangle_{x = 0} = 
 	(\la - \la_k) \int_0^{\iy} e^*(x, \rho_k) e(x, \rho) dx.
$$
Using asymptotics \eqref{asymptex} for $e(x, \rho)$ and $e^*(x, \rho_k)$, we get
$$
 	\lim_{\la \to \la_k} \frac{1}{\la - \la_k} \langle e^*(x, \rho_k), e(x, \rho) \rangle_{x = \iy} = 0_m.
$$
By virtue of the self-adjointness, $e^*(x, \rho) = e^{\dagger}(x, \rho)$, $\vv^*(x, \la) = \vv^{\dagger}(x, \la)$, $\la < 0$.
Therefore
$$
 \mathcal{S} := u_k^{\dagger} \int_0^{\iy} e^{\dagger}(x, \rho_k) e(x, \rho_k) \, dx u_k = - \lim_{\la \to \la_0}
 \frac{u_k^{\dagger} \langle e^{\dagger}(x, \rho_k), e(x, \rho) \rangle u_k}{\la - \la_0}.   	
$$
Substituting \eqref{eigenf}, we obtain
\begin{multline*}
 	\mathcal{S} = - \lim_{\la \to \la_k} \frac{\al_k^{\dagger} \left({\vv^{\dagger}}'(0, \la_k) e(0, \rho) -
 	\vv^{\dagger}(0, \la_k) e'(0, \rho) \right)}{\la - \la_k} \cdot \lim_{\la \to \la_k}(\la - \la_k) (u(\rho))^{-1} \\ = -\al_k^{\dagger}
 	\lim_{\rho \to \rho_k} (h e(0, \rho) - e'(0, \rho)) (u(\rho))^{-1} = \al_k^{\dagger}.
\end{multline*}
Obviously, $\mathcal{S} = \mathcal{S}^{\dagger} \ge 0$. Hence $\al_k = \al_k^{\dagger} \ge 0$.

Now consider $V(\la) = \frac{1}{2 \pi i} (M^-(\la) - M^+(\la))$, $\la > 0$.
Taking the relations \eqref{reprMe} and $M(\la) = M^*(\la)$ into account, we have
$$
 	M^-(\la) = (u^*(-\rho))^{-1} e^*(0, -\rho), \quad M^+(\la) = e(0, \rho) (u(\rho))^{-1}, \quad \rho > 0.
$$
Consequently
$$
 	V(\la) = -\frac{1}{2 \pi i} (u^*(-\rho))^{-1} \langle e^*(x, -\rho), e(x, \rho) \rangle (u(\rho))^{-1}.
$$
Substituting \eqref{wronee}, we get
$$
 	V(\la) = \frac{\rho}{\pi} (u^*(-\rho))^{-1} (u(\rho))^{-1}.
$$
For real values of $\rho$, one has $e^*(x, -\rho) = e^{\dagger}(x, \rho)$, $u^*(-\rho) = u^{\dagger}(\rho)$.
Since in the self-adjoint case the set of spectral singularities $\Lambda''$ is empty, $\det u(\rho) \ne 0$, $\rho \in \mathbb{R} \backslash \{ 0 \}$.
Hence $V(\la) = V^{\dagger}(\la) > 0$.

The remaining assertions of the theorem do not need a proof.
\end{proof}

We call the collection $\left( \{ V(\la)\}_{\la > 0}, \: \{ \la_k, \al_k \}_{k = 1}^P \right)$
{\it the spectral data} of $L$. Similarly to the scalar case (see \cite{FY01}), 
the Weyl matrix can be uniquely determined by the spectral data:
\begin{equation} \label{Msd}
 	M(\la) =  \int_0^{\iy} \frac{V(\mu)}{\la - \mu} d\mu + \sum_{k = 1}^P \frac{\al_k}{\la - \la_k}, \quad \la \in \Pi \backslash \Lambda'.
\end{equation}

{\large \bf 5. Self-adjoint case: the inverse problem}\\

Now we are going to apply the general results of Section~3 to the self-adjoint case.

Let us rewrite the main equation \eqref{main} of the inverse problem in terms of the spectral data.
Denote
\begin{gather*}
 	\la_{n0} = \la_n, \quad \la_{n1} = \tilde \la_n, \quad
 	\al_{n0} = \al_n, \quad \al_{n1} = \tilde \al_n, \\
 	\vv_{ni}(x) = \vv(x, \la_{ni}), \quad \tilde \vv_{ni}(x) = \tilde \vv(x, \la_{ni}), \quad
 	\mathcal{M} = \{ (n, 0) \}_{n = 1}^P \cup \{ (n, 1) \}_{n = 1}^{\tilde P}.
\end{gather*}
Then the main equation \eqref{main} can be transformed into the system of equations
\begin{multline} \label{maincont}
\tilde \vv(x, \la) = \vv(x, \la) + \int_0^{\iy} \vv(x, \mu) \hat V(\mu) \tilde D(x, \la, \mu)\, d\mu \\ + 
\sum_{k = 1}^P \vv_{k0}(x) \al_{k0} \tilde D(x, \la, \la_{k0}) - \sum_{k = 1}^{\tilde P} \vv_{k1}(x) \al_{k1} \tilde D(x, \la, \la_{k1}), \quad
\la > 0, 
\end{multline}
\begin{multline} \label{maindisc}
\tilde \vv_{ni}(x) = \vv_{ni}(x) + \int_0^{\iy} \vv(x, \mu) \hat V(\mu) \tilde D(x, \la_{ni}, \mu) \, d\mu \\ +
\sum_{k = 1}^P \vv_{k0}(x) \al_{k0} \tilde D(x, \la_{ni}, \la_{k0}) - \sum_{k = 1}^{\tilde P} \vv_{k1}(x) \al_{k1} \tilde D(x, \la_{ni}, \la_{k1}),
\quad (n, i) \in \mathcal{M},
\end{multline}
with respect to the element $\psi(x) := \left(\{ \vv(x, \la) \}_{\la > 0}, \{ \vv_{ni}(x) \}_{(n, i) \in \mathcal{M}}\right)$ of the Banach space
$B_S$ of pairs $(F_1, F_2)$, where 
$$
	F_1 \in C((0, \iy); \mathbb{C}^{m \times m}), \quad F_2 = \{ f_{ni} \}_{(n, i) \in \mathcal{M}}, \quad
 	f_{ni} \in \mathbb{C}^{m \times m},
$$
with the norm
$$
 	\| (F_1, F_2) \|_{B_S} = \max\left( \sup_{\la > 0} \| F_1(\la) \|, \max\limits_{(n, i) \in \mathcal{M}} \| f_{ni} \| \right).
$$
The system \eqref{maincont}-\eqref{maindisc} has the form $\psi(x) (I + \tilde R(x)) = \tilde \psi(x)$, where
$\tilde R(x) \colon B_S \to B_S$ is a linear compact operator for each fixed $x \ge 0$. By necessity, 
we have the unique solvability of the main equation \eqref{main}, so the equivalent system \eqref{maincont}-\eqref{maindisc}
is uniquely solvable, and the operator $(I + \tilde R(x))$ has a bounded inverse. Now we are going to prove, that
all these facts follow from some simple properties of spectral data.

We will say that data $\left( \{ V(\la) \}_{\la > 0}, \{ \la_k, \al_k \}_{k = 1}^P \right)$ belongs to the class $\mbox{Sp}$, if

($i_1$) $\la_k$ are distinct negative numbers.

($i_2$) $\al_k$ are nonzero Hermitian matrices, $\al_k \ge 0$.

($i_3$) The $m \times m$ matrix function $\rho V(\la)$ is continuous and bounded as $\la > 0$, $V(\la) > 0$
and $M(\la) = O(\rho^{-1})$ as $\rho \to 0$, where $M(\la)$ is defined by \eqref{Msd}.

($i_4$) There exists a model problem $\tilde L$, such that \eqref{restV} holds.

Note that the spectral data of any self-adjoint boundary value problem $L(Q, h)$ belong to $\mbox{Sp}$.

\begin{lem}  \label{lem:mainsolve}
Let data $\left( \{ V(\la) \}_{\la > 0}, \{ \la_k, \al_k \}_{k = 1}^P \right)$ belong to $\mbox{Sp}$.
Then for each fixed $x \ge 0$, the system \eqref{maincont}-\eqref{maindisc} is uniquely solvable.
In other words, the operator $(I + \tilde R(x))$ is invertible.
\end{lem}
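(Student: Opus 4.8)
The plan is to invoke the Fredholm alternative. Since $\tilde R(x)$ is compact for each fixed $x \ge 0$, the operator $(I + \tilde R(x))$ is boundedly invertible as soon as the homogeneous equation $\gamma(x)(I + \tilde R(x)) = 0$ has only the trivial solution in $B_S$. So I would take a solution $\gamma(x) = \bigl(\{\gamma(x,\la)\}_{\la>0}, \{\gamma_{ni}(x)\}_{(n,i)\in\mathcal{M}}\bigr)$ of the homogeneous system obtained from \eqref{maincont}--\eqref{maindisc} by replacing the left-hand sides with $0_m$, and aim to prove $\gamma(x)\equiv 0$. Throughout I would take the model $\tilde L$ to be self-adjoint, so that $\tilde\vv^*(t,\mu)=\tilde\vv^{\dagger}(t,\mu)$ for real $\mu$ and, by \eqref{defD}, $\tilde D(x,\la,\mu)=\int_0^x\tilde\vv^{\dagger}(t,\mu)\tilde\vv(t,\la)\,dt$.

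The first step is to introduce the auxiliary matrix function
$$
\eta(t):=\int_0^{\iy}\gamma(x,\mu)\hat V(\mu)\tilde\vv^{\dagger}(t,\mu)\,d\mu+\sum_{k=1}^{P}\gamma_{k0}(x)\al_k\tilde\vv^{\dagger}(t,\la_k)-\sum_{k=1}^{\tilde P}\gamma_{k1}(x)\tilde\al_k\tilde\vv^{\dagger}(t,\tilde\la_k),
$$
which is integrable on $(0,x)$ thanks to \eqref{restV} and the uniform bound $\|\tilde\vv(t,\mu)\|\le C$ on $\ga$. Factoring $\tilde D$ as above, both the continuous and discrete homogeneous equations collapse to the single relation $\gamma(x,\la)=-\int_0^x\eta(t)\tilde\vv(t,\la)\,dt$, valid for $\la>0$ and for $\la=\la_{ni}$. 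Substituting this back into $\eta$ and interchanging the integrations (justified by \eqref{restV}), the $\tilde V$-part of $\hat V$ together with the $\tilde\al_k$-terms assemble into the resolution-of-identity (Parseval) kernel of the self-adjoint model,
$$
\int_0^{\iy}\tilde\vv(s,\mu)\tilde V(\mu)\tilde\vv^{\dagger}(t,\mu)\,d\mu+\sum_{k=1}^{\tilde P}\tilde\vv(s,\tilde\la_k)\tilde\al_k\tilde\vv^{\dagger}(t,\tilde\la_k)=\de(s-t)I_m,
$$
valid since $\tilde L$ is self-adjoint (cf.\ \cite{AM60}). This reduces the identity to $\int_0^x\eta(s)\mathcal{K}(s,t)\,ds=0$ for $t\in(0,x)$, where $\mathcal{K}(s,t):=\int_0^{\iy}\tilde\vv(s,\mu)V(\mu)\tilde\vv^{\dagger}(t,\mu)\,d\mu+\sum_{k=1}^P\tilde\vv(s,\la_k)\al_k\tilde\vv^{\dagger}(t,\la_k)$ is built solely from the given data.

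The positivity of the data then forces $\gamma$ to vanish. Multiplying $\int_0^x\eta(s)\mathcal{K}(s,t)\,ds=0$ on the right by $\eta^{\dagger}(t)$, integrating over $t\in(0,x)$, and setting $G(\mu):=\int_0^x\eta(s)\tilde\vv(s,\mu)\,ds=-\gamma(x,\mu)$, I would obtain
$$
\int_0^{\iy}G(\mu)V(\mu)G^{\dagger}(\mu)\,d\mu+\sum_{k=1}^{P}G(\la_k)\al_k G^{\dagger}(\la_k)=0_m.
$$
Each summand is Hermitian and nonnegative because $V(\mu)>0$ and $\al_k\ge0$ (properties $(i_2)$--$(i_3)$ of the class $\mathrm{Sp}$), hence every one vanishes; in particular $G(\mu)V(\mu)G^{\dagger}(\mu)\equiv0$, and strict positivity of $V$ yields $G(\mu)=0$, i.e.\ $\gamma(x,\mu)=0$ for all $\mu>0$. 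To finish, I would note that $\gamma(x,\la)=-\int_0^x\eta(t)\tilde\vv(t,\la)\,dt$ is an entire function of $\la$, since $\tilde\vv(t,\cdot)$ is entire and $\eta\in L(0,x)$; vanishing on the ray $(0,\iy)$, it vanishes identically by the identity theorem, so $\gamma_{ni}(x)=\gamma(x,\la_{ni})=0$ for every $(n,i)\in\mathcal{M}$. Thus $\gamma(x)\equiv0$ in $B_S$, and the Fredholm alternative gives the invertibility of $(I+\tilde R(x))$.

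The main obstacle I anticipate is the rigorous use of the model's Parseval relation: it is the one ingredient not already supplied in the excerpt, it genuinely requires the model problem to be self-adjoint, and the cancellation of the $\tilde V$- and $\tilde\al_k$-terms against the $\de$-kernel depends on interchanging the $\mu$-integration with the $t$-integration, which in turn rests on the decay \eqref{restV} and the uniform bounds for $\tilde\vv$ on $\ga$. Once this is in place, the positivity argument is routine, and the analytic-continuation step disposes of the discrete components essentially for free.
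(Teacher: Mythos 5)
Your proposal is correct in outline, but it reaches the decisive identity by a genuinely different route than the paper. The paper also starts from the Fredholm alternative and ends with exactly your positivity identity $\sum_{k}\beta_{k0}(x)\al_{k0}\beta_{k0}^{\dagger}(x)+\int_0^{\iy}\beta(x,\la)V(\la)\beta^{\dagger}(x,\la)\,d\la=0_m$, but it derives it by contour integration: it forms $\Gamma(x,\la)$ from the homogeneous solution and the model Weyl solution $\tilde\Phi(x,\la)$ (see \eqref{defGamma}), uses $\tilde\Phi=\tilde S+\tilde\vv\tilde M$ and \eqref{Msd} to show that $\Gamma$ equals $\beta(x,\la)M(\la)$ up to entire terms, reads off its poles (residues $\beta_{k0}(x)\al_{k0}$) and its jump $\beta(x,\la)V(\la)$ across the positive half-line, and then applies Cauchy's theorem to $\mathscr{B}(x,\la)=\Gamma(x,\la)\beta^{\dagger}(x,\bar\la)$ on the expanding contours $\ga_R^0$, killing the large circle with the estimates \eqref{estbeta}, \eqref{estGamma}. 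You instead factor $\tilde D$ through $\tilde\vv$ via \eqref{defD}, introduce $\eta(t)$, and cancel the $\tilde V$- and $\tilde\al_k$-parts of the kernel against the completeness (Parseval) relation of the self-adjoint model problem; both proofs then finish identically ($V>0$, $\al_k\ge 0$, entirety of the continued solution). What the paper's route buys is self-containedness: it needs only \eqref{Msd}, Cauchy's theorem and the standard estimates, and never invokes the direct spectral theorem. What your route buys is conceptual brevity, at the price of importing the eigenfunction expansion of $\tilde L$ as an external ingredient --- legitimate here, since $\tilde L$ is an honest self-adjoint problem whose spectral data are exactly what appears in \eqref{maincont}--\eqref{maindisc}, but the matrix half-line expansion theorem for Robin boundary conditions is not contained in the paper and must be cited (it follows from \cite{AM60}-type theory).

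Two details that you acknowledge only loosely should be pinned down. First, the Fubini interchanges and the convergence of $\int_0^{\iy}G(\mu)V(\mu)G^{\dagger}(\mu)\,d\mu$ require decay of $G(\mu)=-\gamma(x,\mu)$ as $\mu\to\iy$: boundedness of $\gamma$ in $B_S$ gives only $O(1)$, which is not enough because $\|V(\mu)\|\,d\mu$ behaves like a constant times $d\rho$; you need either the analogue of the paper's estimate \eqref{estbeta}, $\|\gamma(x,\la)\|\le C|\rho|^{-1}\exp(|\tau|x)$, or a Plancherel-type bound $\int_0^{\iy}\|G(\rho^2)\|^2\,d\rho<\iy$ coming from $\eta\in L_2(0,x)$. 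Second, the delta-kernel cancellation is an $L_2$-statement (the expansion theorem applied to the rows of $\eta$ extended by zero outside $(0,x)$), so the identity $\int_0^x\eta(s)\mathcal{K}(s,t)\,ds=0$ holds only for almost every $t\in(0,x)$; this suffices for the subsequent integration against $\eta^{\dagger}(t)$, but the argument should be phrased that way rather than pointwise.
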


\begin{proof}
Fix $x \ge 0$. The operator $\tilde R(x)$ is compact, so it is sufficient to prove the unique solvability of 
the homogeneous system \eqref{maincont}-\eqref{maindisc}. Let
$\left(\{ \beta(x, \la) \}_{\la > 0}, \{ \beta_{ni}(x) \}_{(n, i) \in \mathcal{M}}\right) \in B_S$ be a solution
of the homogeneous system:
\begin{multline} \label{beta1}
   \beta(x, \la) + \int_0^{\iy} \beta(x, \mu) \hat V(\mu) \tilde D(x, \la, \mu) \, d\mu \\ +
   \sum_{k = 1}^P \beta_{k0}(x) \al_{k0} \tilde D(x, \la, \la_{k0}) - 
   \sum_{k = 1}^{\tilde P} \beta_{k1}(x) \al_{k1} \tilde D(x, \la, \la_{k1}) = 0_m, \quad \la > 0,
\end{multline}
\begin{multline*}
 	\beta_{ni}(x) + \int_0^{\iy} \beta(x, \mu) \hat V(\mu) \tilde D(x, \la_{ni}, \mu) \,d\mu \\ +
 	\sum_{k = 1}^P \beta_{k0}(x) \al_{k0} \tilde D(x, \la_{ni}, \la_{k0}) - 
 	\sum_{k = 1}^{\tilde P} \beta_{k1}(x) \al_{k1} \tilde D(x, \la_{ni}, \la_{k1}) = 0_m,
 	\quad (n, i) \in \mathcal{M}.
\end{multline*}
Note that the formula \eqref{beta1} gives an analytic continuation of the matrix function $\beta(x, \la)$
to the whole $\la$-plane. Clearly, $\beta(x, \la_{ni}) = \beta_{ni}(x)$.

Using the standard estimate $\| \tilde D(x, \la, \la_{kj}) \| \le \frac{C}{|\rho|} \exp(|\tau|x)$
(see \cite{FY07, FY01}) together with \eqref{restV}, one can show that 
\begin{equation} \label{estbeta}
 	\| \beta(x, \la) \| \le \frac{C}{|\rho|}\exp(|\tau|x).
\end{equation}

Define the function
\begin{multline} \label{defGamma}
\Gamma(x, \la) = -\int_0^{\iy} \beta(x, \mu) \hat V(\mu) \frac{\langle \tilde \vv^*(x, \mu), \tilde \Phi(x, \la) \rangle}{\la - \mu} d\mu \\
- \sum_{k = 1}^P \beta_{k0}(x) \al_{k0} \frac{\langle \tilde \vv^*_{k0}(x), \tilde \Phi(x, \la) \rangle}{\la - \la_{k0}} +
\sum_{k = 1}^{\tilde P} \beta_{k1}(x) \al_{k1} \frac{\langle \tilde \vv^*_{k1}(x), \tilde \Phi(x, \la) \rangle}{\la - \la_{k1}}.
\end{multline}
Using the relations $\tilde \Phi(x, \la) = \tilde S(x, \la) + \tilde \vv(x, \la) \tilde M(\la)$ and \eqref{beta1}, one can easily 
derive the following formula
\begin{multline*}
\Gamma(x, \la) = \beta(x, \la) \tilde M(\la) - \int_0^{\iy} \beta(x, \mu) \hat V(\mu) 
\frac{\langle \tilde \vv^*(x, \mu), \tilde S(x, \la) \rangle}{\la - \mu} d\mu \\ - 
\sum_{k = 1}^P \beta_{k0}(x) \al_{k0} \frac{\langle \tilde \vv^*_{k0}(x), \tilde S(x, \la) \rangle}{\la - \la_{k0}} + 
\sum_{k = 1}^{\tilde P} \beta_{k1}(x) \al_{k1} \frac{\langle \tilde \vv^*_{k1}(x), \tilde S(x, \la) \rangle}{\la - \la_{k1}}.
\end{multline*}
Since $\langle \tilde \vv^*(x, \mu), \tilde S(x, \la) \rangle_{x = 0} = -I_m$, we have
$$
 	\frac{\tilde \vv^*(x, \mu), \tilde S(x, \la)}{\la - \mu} = -\frac{I_m}{\la - \mu} + \int_0^x \tilde \vv^*(t, \mu) \tilde S(t, \la) \, dt.
$$
Consequently, we can represent $\Gamma(x, \la)$ in the following form
$$
 	\Gamma(x, \la) = \beta(x, \la) \tilde M(\la) + \int_0^{\iy} \frac{\beta(x, \mu) \hat V(\mu)}{\la - \mu} d\mu + 
 	\sum_{k = 1}^P \frac{\beta_{k0}(x) \al_{k0}}{\la - \la_{k0}} - \sum_{k = 1}^{\tilde P} \frac{\beta_{k1}(x) \al_{k1}}{\la - \la_{k1}}
 	+ \Gamma_1(x, \la),
$$
where the matrix function $\Gamma_1(x, \la)$ is entire by $\la$, since $\tilde S(x, \la)$ is entire.
Taking \eqref{Msd} into account, we obtain
$$
 	\Gamma(x, \la) = \beta(x, \la) M(\la) + \Gamma_1(x, \la) - \Gamma_2(x, \la), 
$$
where
$$
 	\Gamma_2(x, \la) = \int_0^{\iy} \frac{(\beta(x, \la) - \beta(x, \mu)) \hat V(\mu)}{\la - \mu}d\mu + 
 	\sum_{k = 1}^P \frac{(\beta(x, \la) - \beta_{k0}(x)) \al_{k0}}{\la - \la_{k0}} - 
 	\sum_{k = 1}^{\tilde P} \frac{(\beta(x, \la) - \beta_{k1}(x)) \al_{k1}}{\la - \la_{k1}}.
$$
Obviously, the function $\Gamma_2(x, \la)$ is entire in $\la$. Therefore, the function $\Gamma(x, \la)$
has simple poles at the points $\Lambda'$ and
\begin{equation*} 
 \Res_{\la = \la_{k0}} \Gamma(x, \la) = \beta_{k0}(x) \al_{k0}, \quad k = \overline{1, P}.
\end{equation*}
Furthermore, 
$$
 	\frac{1}{2 \pi i} \left( \Gamma^-(\la) - \Gamma^+(\la) \right) = \beta(x, \la) V(\la), \quad
 	\Gamma^{\pm}(\la) := \lim_{z \to 0, \, \mbox{Re}\, z > 0} \Gamma(\la \pm i z), \quad \la > 0.
$$
Using \eqref{estbeta} and the standard asymptotics for $\tilde \vv^*(x, \mu)$ and $\tilde \Phi(x, \la)$, one 
arrive at the estimate
\begin{equation} \label{estGamma}
\| \Gamma(x, \la) \| \le C |\rho|^{-2} \exp(-|\tau|x), \quad |\la| \to \iy.
\end{equation}

\begin{center}

\setlength{\unitlength}{1mm}
\begin{picture}(60, 50)
\linethickness{0.3mm}
\put(25,0){\vector(0,1){50}}
\put(0,25){\vector(1,0){60}}
\linethickness{0.15mm}
\put(45,27){\line(-1,0){4}}
\put(36,27){\vector(1,0){5}}
\put(36,23){\line(1,0){4}}
\put(45,23){\vector(-1,0){5}}
\put(20,34.5){\vector(2,1){0.5}}
\put(10,42){\vector(-1,-1){0.5}}
\put(2,48){$\ga_R^0$}
\linethickness{0.075mm}
\qbezier(25,36)(15,35)(14,25)
\qbezier(14,25)(15,15)(25,14)
\qbezier(25,36)(35,35)(36,27)
\qbezier(25,14)(35,15)(36,23)
\qbezier(45,27)(45,45)(25,47)
\qbezier(25,47)(3,45)(3,25)
\qbezier(3,25)(3,5)(25,3)
\qbezier(25,3)(45,5)(45,23)

\end{picture}

\medskip

Fig. 2
\end{center}

Introduce the matrix function $\mathscr{B}(x, \la) := \Gamma(x, \la) \beta^{\dagger}(x, \bar \la)$, and consider the integral
$$
 	\mathscr{I} := \frac{1}{2\pi i} \int_{\ga_R^0} \mathscr{B}(x, \la) \, d\la
$$
over the contour $\ga_R^0 := \left( \ga \cap \{ \la \colon \la \le R\} \right) \cup \{ \la \colon |\la| = R \}$
 (see Fig.~2). For a sufficiently large radius $R$, $\mathscr{I} = 0_m$ by
Cauchy theorem. In view of the estimates \eqref{estbeta}, \eqref{estGamma}, we have 
$\| \mathscr{B}(x, \la) \| \le C |\rho|^{-3}$, $|\la| \to \iy$. Hence
$$
 	\lim_{R \to \iy} \frac{1}{2 \pi i} \int_{|\la| = R} \mathscr{B}(x, \la) \, d\la = 0_m,
$$
$$
 	\frac{1}{2 \pi i} \int_{\ga} \mathscr{B}(x, \la) \, d\la = 0_m.
$$
The last integral over $\ga$ can be calculated by the residue theorem. It equals
$$
 	\sum_{k = 0}^{\iy} \beta_{k0}(x) \al_{k0} \beta_{k0}^{\dagger}(x)  + \frac{1}{2\pi i} 
 	\int_{|\la| = \eps} \mathscr{B}(x, \la) \, d\la + \int_{\eps}^{\iy} \beta(x, \la) V(\la) \beta^{\dagger}(x, \la) \, d\la,
$$
where $\eps > 0$ is sufficiently small. Note that since $M(\la) = O(\rho^{-1})$ as $\la \to 0$, 
the second term in the sum tends to zero as $\eps \to 0$. Finally, we obtain
$$
 	\sum_{k = 0}^{\iy} \beta_{k0}(x) \al_{k0} \beta_{k0}^{\dagger}(x) + \int_0^{\iy} \beta(x, \la) V(\la) \beta^{\dagger}(x, \la) \, d\la = 0.
$$
Since $\al_{k0} \ge 0$, $V(\la) > 0$, we get $\beta_{k0}(x) \al_{k0} = 0_m$, $\beta(x, \la) V(\la) = 0_m$, and
$\beta(x, \la) = 0$ for $\la > 0$. Since $\beta(x, \la)$ is an entire function in $\la$, we conclude that $\beta(x, \la) \equiv 0_m$.
Consequently, $\beta_{k0}(x) = \beta(x, \la_{k0}) = 0_m$. Thus, the homogeneous system has only trivial solution, 
so the system \eqref{maincont}-\eqref{maindisc} is uniquely solvable. 
\end{proof}

Solving the main equation, one can construct the following matrix functions:
\begin{multline} \label{defeps2}
 	\eps_0(x) := \int_0^{\iy} \vv(x, \mu) \hat V(\mu) \tilde \vv(x, \mu) \, d\mu +
 	\sum_{k = 1}^P \vv_{k0}(x) \al_{k0} \tilde \vv^*_{k0}(x) - 
 	\sum_{k = 1}^{\tilde P} \vv_{k1}(x) \al_{k1} \tilde \vv^*_{k1}(x),
 	\\ \eps(x) := -2 \eps_0'(x).
\end{multline}
and then recover $Q(x)$ and $h$ via \eqref{Qh}.
Theorem~\ref{thm:NSC} and Lemma~\ref{lem:mainsolve} yield the following theorem.

\begin{thm} \label{thm:SD}
For data $S := \left( \{ V(\la)\}_{\la > 0}, \: \{ \la_k, \al_k \}_{k = 1}^P \right)$
to be the spectral data of some self-adjoint boundary value problem $L(Q, h)$, $Q = Q^{\dagger}$, $h = h^{\dagger}$,
satisfying \eqref{xQx}, it is necessary and sufficient to belong to the class $\mbox{Sp}$ and
to have such a property, that $(1 + x) \eps(x) \in L((0, \iy); \mathbb{C}^{m \times m})$, where
$\eps(x)$ is constructed via \eqref{defeps2} by the unique solution of the system \eqref{maincont}-\eqref{maindisc} $\vv(x, \la)$. 
\end{thm}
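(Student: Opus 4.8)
The plan is to obtain Theorem~\ref{thm:SD} as a specialization of the general criterion in Theorem~\ref{thm:NSC}, using Lemma~\ref{lem:mainsolve} to dispose of the solvability hypothesis and the Hermitian symmetry of the data to upgrade the reconstructed problem to a self-adjoint one. For the necessity, suppose $S$ is the spectral data of a self-adjoint $L(Q,h)$ obeying \eqref{xQx}. The inclusion $S\in\mbox{Sp}$ is exactly the content of the combined theorem of Section~4 (the properties of $V(\la)$ and of the pairs $\la_k,\al_k$), while property $(i_4)$ is arranged by fixing a self-adjoint model $\tilde L$ with $\tilde h=h$ and a potential satisfying both \eqref{xQx} and \eqref{restV}. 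For that model \eqref{Qh} gives $\eps=Q-\tilde Q$, so $(1+x)\eps=(1+x)(Q-\tilde Q)\in L((0,\iy);\mathbb{C}^{m\times m})$ because $Q$ and $\tilde Q$ each lie in $L$ and satisfy \eqref{xQx}.

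For the sufficiency I would assume $S\in\mbox{Sp}$ and $(1+x)\eps(x)\in L$, fix a self-adjoint model $\tilde L$ fulfilling \eqref{restV} and \eqref{xQx}, and verify the three hypotheses of Theorem~\ref{thm:NSC} for the matrix $M(\la)$ defined by \eqref{Msd}: Condition~1 is property $(i_4)$; Condition~2, the unique solvability of the main equation (equivalently of the system \eqref{maincont}--\eqref{maindisc}), is precisely Lemma~\ref{lem:mainsolve}; and Condition~3 follows from $(1+x)\eps\in L\Rightarrow\eps\in L$. Theorem~\ref{thm:NSC} then produces a boundary value problem $L(Q,h)$ whose Weyl matrix is the $M$ of \eqref{Msd}.

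The key remaining step, which I expect to be the main obstacle, is the self-adjointness of this reconstructed $L$. Here I would exploit the symmetry of the data: since $\la_k\in\mathbb{R}$, $\al_k=\al_k^{\dagger}$ and $V(\la)=V^{\dagger}(\la)$, formula \eqref{Msd} yields $M(\la)^{\dagger}=M(\bar\la)$. On the other hand, taking the conjugate transpose of \eqref{eqv*}, \eqref{IC*} shows that $\Phi^*(x,\la)^{\dagger}$ solves $-Y''+Q^{\dagger}Y=\bar\la\,Y$ and satisfies the boundary form of $L(Q^{\dagger},h^{\dagger})$ with right-hand side $I_m$, so $\Phi^*(x,\la)^{\dagger}$ is the Weyl solution of $L(Q^{\dagger},h^{\dagger})$ at $\bar\la$. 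Hence the Weyl matrix $M^{\sharp}$ of $L(Q^{\dagger},h^{\dagger})$ satisfies $M^{\sharp}(\la)=M^*(\bar\la)^{\dagger}=M(\bar\la)^{\dagger}$, using $M^*\equiv M$. Combining with $M(\la)^{\dagger}=M(\bar\la)$ (equivalently $M(\bar\la)^{\dagger}=M(\la)$) gives $M^{\sharp}\equiv M$, so $L(Q,h)$ and $L(Q^{\dagger},h^{\dagger})$ have the same Weyl matrix; the uniqueness theorem of \cite{FY07} then forces $Q=Q^{\dagger}$ and $h=h^{\dagger}$. The delicate part is the bookkeeping of the boundary forms $U$, $U^*$ and $U^{\sharp}$ under the dagger operation together with the interchange $\la\leftrightarrow\bar\la$.

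Two routine verifications then finish the argument. Condition \eqref{xQx} for the constructed potential follows from $Q=\tilde Q+\eps$, since $x\tilde Q\in L$ by the choice of model and $x\eps\in L$ by hypothesis, whence $xQ\in L$. Finally, the spectral data of the now self-adjoint $L$ are read off from its Weyl matrix through the poles and residues $\{\la_k,\al_k\}$ and the jump $V(\la)=\frac{1}{2\pi i}(M^-(\la)-M^+(\la))$; as this Weyl matrix equals the $M$ of \eqref{Msd}, whose canonical representation encodes exactly those quantities, the recovered spectral data coincide with $S$.
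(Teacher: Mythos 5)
Your proposal is correct and follows essentially the same route as the paper, whose entire proof is the single remark that Theorem~\ref{thm:NSC} and Lemma~\ref{lem:mainsolve} yield the result: necessity from the Section~4 theorem on properties of the spectral data together with $\eps = Q - \tilde Q$, and sufficiency by feeding the unique solvability of \eqref{maincont}--\eqref{maindisc} (Lemma~\ref{lem:mainsolve}) into the general criterion of Theorem~\ref{thm:NSC}. Your additional verifications --- self-adjointness of the reconstructed problem via the symmetry $M(\la)^{\dagger} = M(\bar\la)$ read off from \eqref{Msd}, combined with the identity $M^{\sharp}(\la) = M(\bar\la)^{\dagger}$ for the Weyl matrix of $L(Q^{\dagger},h^{\dagger})$ and the uniqueness theorem of \cite{FY07}; the condition \eqref{xQx} for $Q = \tilde Q + \eps$; and the identification of the poles, residues and jump of the constructed Weyl matrix with the given data --- are sound, and in fact supply details that the paper's one-line proof leaves implicit.
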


\bigskip

{\large \bf 6. Perturbation of the discrete spectrum}\\

Return to the general non-self-adjoint problem, and consider one more particular case, 
when the solvability of the main equation \eqref{main} can be easily checked.
Let the problem $\tilde L$ be given, and $\tilde M(\la)$ is its Weyl matrix.
Consider the matrix function
\begin{equation} \label{discper}
M(\la) = \tilde M(\la) + \sum_{k = 1}^P \sum_{\nu = 1}^{m_k} \frac{\al_{k\nu}}{(\la - \la_k)^{\nu}},
\end{equation}
where $\la_k \in \mathbb{C}$ are some distinct numbers and $\al_{k\nu} \in \mathbb{C}^{m \times m}$, $k = \overline{1, P}$,
$\nu = \overline{1, m_k}$. Then $\hat V(\la) = 0_m$, and by virtue of the residue theorem, 
the main equation \eqref{main} takes the form
$$
 	\tilde \vv(x, \la) = \vv(x, \la) + \sum_{k = 1}^P \sum_{i = 0}^{m_k - 1} \frac{\partial^i}{\partial \la^i} \vv(x, \la_k)
 	\sum_{\nu = i + 1}^{m_k} \al_{k\nu} \tilde D_{\langle 0, \nu - i - 1 \rangle}(x, \la, \la_k),
$$
where $\tilde D_{\langle i, j \rangle}(x, \la, \mu) := \dfrac{\partial^{i + j}}{\partial \la^i \partial \mu^j} \tilde D(x, \la, \mu)$.
Differentiating this relation with respect to $\la$, we arrive at the following system
of linear algebraic equations with respect to the unknown variables $\left\{ \dfrac{\partial^s}{\partial \la^s} \vv(x, \la_n) \right\}$:
\begin{equation} \label{main2}
\frac{\partial^s}{\partial \la^s} \tilde \vv(x, \la_n) = 
\frac{\partial^s}{\partial \la^s} \vv(x, \la_n) + \sum_{k = 1}^P \sum_{i = 0}^{m_k - 1} \frac{\partial^i}{\partial \la^i} \tilde \vv(x, \la_k)
\sum_{\nu = i + 1}^{m_k} \al_{k\nu} \tilde D_{\langle s, \nu - i - 1 \rangle}(x, \la_n, \la_k), 
\end{equation}
$n = \overline{1, P}$, $s = \overline{0, m_n - 1}$.
The system \eqref{main2} has a unique solution if and only if its determinant is not zero.
Having the solution of \eqref{main2}, one can construct
\begin{equation} \label{defeps3}
  	\eps_0(x) = \sum_{k =1}^P \sum_{i = 0}^{m_k - 1} \frac{\partial^i}{\partial \la^i} \vv(x, \la_k) 
  	\sum_{\nu = i + 1}^{m_k} \al_{k \nu} \frac{\partial^{\nu - i - 1}}{\partial \la^{\nu - i - 1}} \tilde \vv^*(x, \la_k),
  	\quad \eps(x) = -2 \eps_0'(x),
\end{equation}
and then find $Q(x)$ and $h$ via \eqref{Qh}.

\begin{thm}  \label{thm:finite}
For the matrix function $M(\la)$ in the form \eqref{discper} to be the Weyl matrix of a certain boundary value
problem $L$, it is necessary and sufficient, that the determinant of the system \eqref{main2} differs from zero,
and $\eps(x) \in L((0, \iy); \mathbb{C}^{m \times m})$, where $\eps(x)$ is defined in \eqref{defeps3}, 
\end{thm}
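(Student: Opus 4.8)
The plan is to obtain Theorem~\ref{thm:finite} as a direct specialization of the general necessary and sufficient conditions of Theorem~\ref{thm:NSC}, taking the given $\tilde L$ as the model problem and verifying its three hypotheses one at a time. First I would check that $M(\la)$ of the form \eqref{discper} belongs to the class $W$. Since $\tilde M(\la)\in W$ and the added rational term $\sum_{k=1}^P\sum_{\nu=1}^{m_k}\al_{k\nu}(\la-\la_k)^{-\nu}$ introduces only the finitely many extra poles $\la_k$ and decays like $O(\rho^{-2})$ as $\rho\to\iy$, property~($i_1$) of $W$ (analyticity off a bounded discrete set, continuity up to the cut) is preserved, and the perturbation affects only the $(i\rho)^{-2}$-coefficient of the asymptotics, so \eqref{asymptM2} still holds with $h$ replaced by $\tilde h-\sum_{k}\al_{k1}$ (the terms with $\nu\ge 2$ are $O(\rho^{-4})=o(\rho^{-2})$); hence $M\in W$.

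Next I would dispatch the three conditions of Theorem~\ref{thm:NSC}. Condition~1 is immediate: because $\hat M=M-\tilde M$ is the rational function appearing in \eqref{discper}, whose poles $\la_k$ lie off the ray $\la\ge 0$, it extends continuously across the cut and $\hat V=V-\tilde V\equiv 0$, as already noted before the statement; thus \eqref{restV} holds trivially for the model $\tilde L$. Condition~3 is nothing but the standing hypothesis $\eps(x)\in L((0,\iy);\mathbb{C}^{m\times m})$ with $\eps$ given by \eqref{defeps3}, which in this setting is exactly the residue-collapse of the general formula \eqref{defeps}. Thus only Condition~2, the unique solvability of the main equation \eqref{main}, requires argument, and this is where the hypothesis on the determinant enters.

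For Condition~2 I would use the reduction already carried out preceding the statement. Since $\vv(x,\mu)\tilde D(x,\la,\mu)$ is entire in $\mu$ while $\hat M(\mu)$ has poles only at the $\la_k$, the contour integral in \eqref{main} collapses by the residue theorem to the finite sum displayed before the theorem, so that $\vv(x,\la)$ is determined everywhere by the finite family of derivatives $\{\partial_\la^s\vv(x,\la_n)\}$. Differentiating this collapsed identity $s$ times and evaluating at $\la=\la_n$ yields precisely the algebraic system \eqref{main2} for those derivatives. The assignment sending a solution $\vv$ of \eqref{main} to the tuple of its derivatives at the $\la_n$, and conversely sending a tuple to the function defined by the collapsed right-hand side, is a bijection between solutions of \eqref{main} and solutions of \eqref{main2}; consequently \eqref{main} is uniquely solvable for each fixed $x\ge 0$ if and only if \eqref{main2} is, that is, if and only if its determinant is nonzero.

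Having matched the three conditions of Theorem~\ref{thm:NSC} with the two hypotheses of the present theorem, I would conclude by invoking Theorem~\ref{thm:NSC}: the pair $Q,h$ produced from the solution of \eqref{main2} through \eqref{defeps3} and \eqref{Qh} forms a problem $L$ whose Weyl matrix is the given $M(\la)$, which proves sufficiency, while necessity follows since any $L$ with Weyl matrix \eqref{discper} must satisfy these same conditions. The only point demanding genuine care is the bijection between \eqref{main} and \eqref{main2}, in particular checking that a solution of the finite system really returns a solution of the full integral equation rather than merely a consistent set of sampled values; the verification of $M\in W$ and of $\hat V\equiv 0$ is routine bookkeeping.
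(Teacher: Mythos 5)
Your proposal coincides with the paper's (implicit) argument: Theorem~\ref{thm:finite} is stated there without a separate proof precisely because it is the specialization of Theorem~\ref{thm:NSC} combined with the residue-theorem reduction of the main equation \eqref{main} to the algebraic system \eqref{main2} carried out immediately before the statement, which is exactly your route. Your checks that $M(\la)\in W$, that $\hat V\equiv 0_m$ settles Condition~1, and that unique solvability of \eqref{main} is equivalent to the nonvanishing of the determinant of \eqref{main2} (via the correspondence between solutions of the two) fill in the same steps the paper leaves to the reader.
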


There is the example, provided in \cite[Section 2.3.2]{FY01}, showing that even in the simple case of a finite perturbation,
the condition $\eps(x) \in L((0, \iy); \mathbb{C}^{m \times m})$ is essential and can not be omitted.
So it is crucial in Theorems~\ref{thm:NSC}, \ref{thm:SD} and \ref{thm:finite}.

\medskip

{\bf Acknowledgment.} This work was supported by Grant 1.1436.2014K
of the Russian Ministry of Education and Science and by Grants 13-01-00134 and 14-01-31042
of Russian Foundation for Basic Research.

\medskip

\vspace{1cm}

Natalia Bondarenko

Department of Mechanics and Mathematics

Saratov State University

Astrakhanskaya 83, Saratov 410012, Russia

{\it bondarenkonp@info.sgu.ru}


\begin{thebibliography}{99}

\bibitem{Mar77}
Marchenko, V. A. Sturm-Liouville Operators and their Applications, Naukova Dumka,
Kiev (1977) (Russian); English transl., Birkhauser (1986).

\bibitem{Lev84}
Levitan, B.M. Inverse Sturm-Liouville Problems, Nauka, Moscow (1984) (Russian); English
transl., VNU Sci. Press, Utrecht (1987).

\bibitem{PT87}
P\"{o}schel, J; Trubowitz, E. Inverse Spectral Theory, New York, Academic Press (1987).

\bibitem{FY01}
Freiling, G.; Yurko, V. Inverse Sturm-Liouville problems and their applications. Huntington,
NY: Nova Science Publishers, 305 p. (2001).

\bibitem{AM60}
Agranovich, Z. S.; Marchenko, V. A. The inverse problem of scattering theory [in Russian],
KSU, Kharkov, 1960;
Gordon and Breach, New York, 1963 (Eng. Transl.).

\bibitem{Beals95}
Beals, R.; Henkin, G. M.; Novikova, N. N. The inverse boundary problem for
the Rayleigh system. J. Math. Phys. 36:12 (1995), 6688--6708.

\bibitem{Boutet95}
Boutet de Monvel, A.; Shepelsky, D. Inverse scattering problem for anisotropic
media. J. Math. Phys. 36:7 (1995), 3443--3453.

\bibitem{CD77}
Calogero, F., Degasperis, A. Nonlinear evolution equations solvable by the inverse spectral transform II. 
Nuovo Cimento B 39:1 (1977), 1--54.

\bibitem{Yang12}
Yang, C.-F. Trace formulae for the matrix Schrodinger equation with
energy-dependent potential. J. Math. Anal. Appl. 393 (2012), 526–-533.

\bibitem{Carlson99}
Carlson, R. Large eigenvalues and trace formulas for matrix Sturm-Liouville problems. 
SIAM. J. Math. Anal. 30:5 (1999), 949--962.

\bibitem{Carlson00}
Carlson, R. Eigenvalue estimates and trace formulas for the
matrix Hill's equation. J. Differential Equations 167 (2000), 211--244.

\bibitem{Chern98}
Chern, H.-H. On the eigenvalues of some vectorial Sturm-Lioville eigenvalue problems.
ArXiv:math/9902019 [math.SP].

\bibitem{Dwyer94}
Dwyer, H.I.; Zettl, A. Computing eigenvalues of regular Sturm-Liouville problems.
Electronic J. Diff. Equations 1994:6 (1994), 1--10.

\bibitem{Malamud05} 
Malamud, M.M. Uniqueness of the matrix Sturm-Liouville equation
given a part of the monodromy matrix, and Borg type results.
Sturm-Liouville Theory, Birkh\"{a}user, Basel, 2005, 237--270.

\bibitem{Yur06}
Yurko, V.A. Inverse problems for the matrix Sturm-Liouville equation on a finite interval,
Inverse Problems 22 (2006), 1139--1149.

\bibitem{CK09}
Chelkak, D.; Korotyaev, E. Weyl-Titchmarsh functions of vector-valued
Sturm-Liouville operators on the unit interval,
J. Func. Anal. 257 (2009), 1546--1588.

\bibitem{MT10} 
Mykytyuk, Ya.V.; Trush, N.S. Inverse spectral problems for Sturm-Liouville
operators with matrix-valued potentials,
Inverse Problems, 26 (2010), 015009.

\bibitem{Bond14}
Bondarenko, N. An inverse problem for the non-self-adjoint matrix Sturm-Liouville operator.
ArXiv:1407.3581 [math.SP].
 
\bibitem{FY07}
Freiling, G.; Yurko, V.
An inverse problem for the non-selfadjoint
matrix Sturm–Liouville equation on the half-line.
J. Inv. Ill-Posed Problems, 15 (2007), 785–-798.

\bibitem{Yur02}
Yurko, V.A. Method of Spectral Mappings in the Inverse Problem Theory. Inverse and
Ill-Posed Problems Series, Utrecht: VSP (2002).

\bibitem{ABI91}
Avdonin, S. A.; Belishev, M. I.; Ivanov, S. A.
Boundary control and a~matrix inverse problem for the equation $u_{tt}-u_{xx}+V(x)u=0$.
Mat. Sb., 182:3 (1991), 307--331.

\bibitem{BF14}
Bondarenko, N.; Freiling, G.
An inverse problem for the quadratic pencil of non-self-adjoint matrix operators on the half-line.
J. Inverse Ill-Posed Problems, 22 (2014), 467--495.


\end{thebibliography}
\end{document}